\newtheorem{thm}{Theorem}[section]
\newtheorem{cor}[thm]{Corollary}
\newtheorem{lem}[thm]{Lemma}
\newtheorem{prop}[thm]{Proposition}
\theoremstyle{definition}
\theoremstyle{remark}
\numberwithin{equation}{section}
\begin{document}

\title[A priori estimates and Blow-up behavior]{A priori estimates and Blow-up behavior
for solutions of $-Q_{N}u=Ve^{u}$ in bounded domain in $\mathbb{R}^{N}$ }%
\author{Rulong Xie and Huajun Gong}%
\address{Rulong Xie. School of Mathematical Sciences, University of
Science and Technology of China, Hefei 230026, China}
\email{xierl@mail.ustc.edu.cn;}%
\address{ Huajun Gong (Corresponding author). School of Mathematical Sciences, University of
Science and Technology of China, Hefei 230026, China}%
\email{huajun84@hotmail.com.}

\thanks{The authors would like to
thank Prof. Jiayu Li and Dr. Chao Xia for helpful discussion and suggestions.
This work is supported by the Excellent Young Talent Foundation of Anhui Province (2013SQRL080ZD)}%
\subjclass{35J60, 53C60}%
\keywords{Blow-up behavior, $N$-anisotropic Laplacian, $N$-anisotropic Liouville equation. }%

\begin{abstract}
 Let $Q_{N}$ be $N$-anisotropic Laplacian operator, which contains the ordinary  Laplacian operator, $N$-Laplacian operator and anisotropic Laplacian operator. In this paper, we firstly obtain the properties for $Q_{N}$, which contain the weak maximal principle, the comparison principle and the mean value property. Then a priori estimates and blow-up analysis for solutions of $-Q_{N}u=Ve^{u}$ in bounded domain in $\mathbb{R}^{N}$, $N\geq 2$ are established. Finally, the behavior of sole blow-up point is further considered.
\end{abstract}
\maketitle
\section{Introduction and main results}

As we know, Liouville equation was firstly studied in 1853 in \cite{JLiouville}.
This equation has many applications in geometry and physical problem, for instance,
in the problem of prescribing Gaussian curvature \cite{SChangPYang,WChenCLi2}, the mean field equation
\cite{DingJostLiWang1,DingJostLiWang2,CChenCLin,ZDjadli,CLin} and the Chern-Simons model \cite{JSpruckYang,MStruweGTarantello,GTarantello}. About the blow-up analysis of solution of this
equation, we must mention the celebrated paper of Brezis and Merle \cite{BrezisMerle}. They firstly researched
the blow-up behavior and uniformly estimates for solutions of Liouville equation
\begin{equation}
-\Delta u=V(x)e^{u(x)}
\end{equation}
in bounded domain $\Omega\subset\mathbb{R}^{2}$ with $V(x)\in L^{p}(\Omega)$ and $e^{u}\in L^{p'}(\Omega)$
for some $1 < p\leq \infty$ and $p'$ is the H\"{o}lder
conjugate index of $p$. From then, blow-up analysis about the solutions of various
equation and system of equations have been extensively established (see \cite{BartolucciTarantello,DingJostLiWang1,JJostGWang,Jliyli,YLi2,RenWei} and their references). For example, Ren and Wei in \cite{RenWei} established
similar results for $N$-Laplacian operators in $\Omega\subset \mathbb{R}^{N}$, $N\geq 2$. Recently, Wang and Xia \cite{WangCXia1} generalized the blow-up analysis for Liouville type equation with anisotropic Laplacian (or Finsler-Laplacian)
in $\Omega\subset \mathbb{R}^{2}$. At the same time, they also defined $N$-anisotropic Laplacian (or $N$-Finsler Laplacian) as follows:
\begin{equation}
-Q_{N} u:=\sum_{i=1}^{N}\frac{\partial}{\partial x_{i}}(F^{N-1}(\nabla u)F_{\xi_i}(\nabla u)),
\end{equation}
where $F\in C^{2}(\mathbb{R}^{N}\backslash\{0\})$ is a convex and homogeneous function (see Section 2) and
$F_{\xi_{i}}=\frac{\partial F}{\partial \xi_{i}}$.
The anisotropic Laplacian is closely related to a smooth, convex
hypersurface in $\mathbb{R}^{N}$, which is called the Wulff shape. This kind of operator was initiated study in
Wulff's paper (see \cite{GWulff}). Recently, this operator has been widely studied by many mathematicians, see \cite{AlvinoFeroneTrombetti,BelloniFerone,VFeroneBKawohl,IFonsecaSMuller,GWangCXia2,GWangCXia3} and references therein.

It is natural to ask whether it has the similar Brezis-Merle result for Liouville
type equation with $N$-anisotropic Laplacian in bounded domain
in higher dimensions. This paper give the infirmative answer to this question.
More precisely, for a bounded domain $\Omega\subset \mathbb{R}^{N}$, $N\geq 2$, we consider the following quasilinear
equation
\begin{equation}
-Q_{N} u=V(x)e^{u(x)}.
\end{equation}
Equation (1.3) is also called $N$-anisotropic (or $N$-Finsler) Liouville equation.

 It is well known that in the isotropic case, i.e. $F(\xi) = |\xi|$, when $N=2$, $Q_N$ is the ordinary Laplacian operator; when $N>2$, $Q_N$ is the $N$-Laplacian operator. In the anisotropic case, when $N=2$, $Q_{N}$ is  anisotropic Laplacian operator. Therefore, the results of this paper extend that of ordinary Laplacian operator, $N$-Laplacian operator and anisotropic Laplacian operator.

In this paper, we firstly obtain some properties for $Q_{N}$, i.e. the weak maximal principle, the comparison principle and the mean value property. Then we get an a priori estimate for solutions of $N$-anisotropic Liouville equation, i.e. Theorem 1.1 and 1.2 in the paper. Because of the nonlinearity of $Q_{N}$, we have no concrete Green representation formula. Therefore, to prove Theorem 1.1 and 1.2, we use the level set method in \cite{RenWei} and convex symmetrization technique in \cite{AlvinoFeroneTrombetti} to conquer this difficult. From this, we research the blow-up behavior about this equation in bounded domain in $\mathbb{R}^{N}$, $N\geq 2$, which is the content of Theorem 1.3. In Theorem 1.4, the behavior of solo blow-up point is further studied. In the isotropic case, Li in \cite{YLi1} obtain this result using the the method of moving plane. For the anisotropic case, one have no idea how to use this method. Here we obtain this result by analyzing the Pohozaev identity and using the expansion of Green function of $N$-anisotropic Laplacian operator.

In addition, it is worthy mentioning that the positive definiteness of $Hess(F^{N})$, $N\geq2$ is needed in proving the main results of this paper, but condition on $F$ is that $Hess(F^{2})$ is positive definite. In fact, one can deduce positive definiteness of $Hess(F^{N})$ from that of $Hess(F^{2})$, which can be find in the proof of Theorem 3.2 in Section 3.

The main results of this paper are stated as follows.
\begin{thm}\label{thm-main1.1}
Suppose that $\Omega$ is a bounded domain in $\mathbb{R}^{N}$, $N\geq 2$ and $u$ is a weak solution of
\begin{eqnarray}
\begin{cases}
 -Q_{N}u=f(x)& \text{in}\ \Omega,\\
u|_{\partial\Omega}=0.
\end{cases}
\end{eqnarray}
Then for $f\in L^{1}(\Omega)$ and for any $\delta\in (0,N^{\frac{N}{N-1}}k^{\frac{1}{N-1}})=(0,\beta_{N})$, it follows that
\begin{eqnarray}
\int_{\Omega}\exp\left\{\frac{(\beta_{N}-\delta)|u(x)|}{||f||_{L^{1}(\Omega)}^{1/(N-1)}}\right\}dx\leq \frac{\beta_{N}}{\delta}|\Omega|.
\end{eqnarray}
\end{thm}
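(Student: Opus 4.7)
The plan is to combine the level-set method of \cite{RenWei} for the $N$-Laplacian with the anisotropic (Wulff) isoperimetric inequality from the convex-symmetrization framework of \cite{AlvinoFeroneTrombetti}. Since $u\in W_{0}^{1,N}(\Omega)$ vanishes on $\partial\Omega$, the super- and sub-level sets $\{u>t\}$, $\{u<-t\}$ are compactly contained in $\Omega$ for every $t>0$, so test functions supported there are admissible. By running the argument below separately on each side (with sources $f$ and $-f$, respectively) and summing the resulting decay estimates, it suffices to work with the distribution function $\mu(t)=|\{|u|>t\}|$; for clarity I describe the super-level side only.

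The crux is a level-set identity. Testing the weak form of (1.4) against a smooth approximation $G_{\varepsilon}(u-t)$ of $\chi_{\{u>t\}}$, using Euler's identity $F_{\xi}(\xi)\cdot\xi=F(\xi)$ from the $1$-homogeneity of $F$, and passing to the limit via the coarea formula yields, for a.e.\ $t>0$,
\[
\int_{\{u=t\}}\frac{F^{N}(\nabla u)}{|\nabla u|}\,dH^{N-1}=\int_{\{u>t\}}f\,dx\;\leq\;\|f\|_{L^{1}(\Omega)}.
\]
Combined with the coarea identity $-\mu'(t)=\int_{\{u=t\}}|\nabla u|^{-1}\,dH^{N-1}$, the anisotropic isoperimetric inequality
\[
\int_{\{u=t\}}\frac{F(\nabla u)}{|\nabla u|}\,dH^{N-1}=P_{F}(\{u>t\})\;\geq\;N\,k^{1/N}\mu(t)^{(N-1)/N},
\]
and Hölder's inequality with exponents $N$ and $N/(N-1)$, one obtains
\[
N\,k^{1/N}\mu(t)^{(N-1)/N}\;\leq\;\|f\|_{L^{1}}^{1/N}\bigl(-\mu'(t)\bigr)^{(N-1)/N}.
\]
Raising to the $N/(N-1)$ power converts this to the linear differential inequality $-\mu'(t)\geq\beta_{N}\mu(t)/\|f\|_{L^{1}}^{1/(N-1)}$.

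Direct integration then gives $\mu(t)\leq|\Omega|\exp\!\bigl(-\beta_{N}t/\|f\|_{L^{1}}^{1/(N-1)}\bigr)$. Applying the layer-cake formula
\[
\int_{\Omega}e^{\lambda|u|}\,dx=|\Omega|+\lambda\int_{0}^{\infty}e^{\lambda t}\mu(t)\,dt
\]
with $\lambda=(\beta_{N}-\delta)/\|f\|_{L^{1}}^{1/(N-1)}$ reduces the claim to an elementary $t$-integral that converges precisely because $\delta>0$ and evaluates, after one line of algebra, to $\bigl(\beta_{N}/\delta\bigr)|\Omega|$.

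The main obstacle will be the rigorous justification of the level-set identity. Since $-Q_{N}$ is genuinely nonlinear and $u$ lies only in $W_{0}^{1,N}$, the limit $\varepsilon\to 0$ in $G_{\varepsilon}(u-t)$ must be taken for $t$ in the full-measure set of regular values of $u$, and the coarea formula has to be invoked with care on the resulting pieces. A secondary subtlety is the passage from $u$ to $|u|$: because $-Q_{N}$ does not commute with truncations, one cannot simply decompose $u=u^{+}-u^{-}$ in the equation and must instead run the level-set computation independently on $\{u>t\}$ and on $\{u<-t\}$.
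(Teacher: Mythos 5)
Your proof is correct, and it takes a genuinely different route from the paper's own proof of Theorem~1.1. The paper argues by comparison: it invokes the Alvino--Ferone--Trombetti convex-symmetrization principle (Lemma~2.3) to reduce to the Wulff-radial problem \eqref{2.6}, solves the resulting first-order ODE for $v(r)$ explicitly, obtains the pointwise bound $v(r)\le (Nk)^{-1/(N-1)}\|f\|_{L^1}^{1/(N-1)}\log(R/r)$, and then evaluates the exponential integral over the Wulff ball $\Omega^{\star}$. Your proposal instead works directly with the distribution function $\mu(t)$ of $u$: the level-set identity (from testing against $G_{\varepsilon}(u-t)$ together with Euler's relation $\langle F_{\xi}(\xi),\xi\rangle=F(\xi)$), the anisotropic isoperimetric inequality, the coarea formula, and H\"older's inequality combine to give the differential inequality $-\mu'(t)\ge \beta_N\,\mu(t)/\|f\|_{L^1}^{1/(N-1)}$, which integrates to exponential decay of $\mu$, and the layer-cake formula then yields the constant $\beta_N/\delta$ exactly. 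In effect you unroll the content of Lemma~2.3 and bypass the auxiliary ODE entirely; notably, this is the very strategy the paper uses for Theorem~1.2, where no natural symmetrized comparator exists. The tradeoff: the paper's route is shorter once Lemma~2.3 is accepted as a black box, whereas yours is more self-contained, avoids introducing the auxiliary solution $v$, and makes it transparent where the sharp constant $\beta_N=N^{N/(N-1)}k^{1/(N-1)}$ comes from (the isoperimetric constant raised by H\"older). Both approaches require the same care in justifying the coarea and level-set manipulations for a $W_0^{1,N}$ weak solution (and a Gronwall-type step for the possibly non-absolutely-continuous $\mu$); you flag this explicitly, and it is no worse than the analogous steps already appearing in the paper's proof of Theorem~1.2.
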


\begin{thm}\label{thm-main1.2}
Suppose that $u$ and $v$ are the weak solution of
\begin{eqnarray}
-Q_{N}u=f(x)>0\ \text{in} \ \ \Omega
\end{eqnarray}
and
\begin{eqnarray}
\begin{cases}
 -Q_{N}v=0& \text{in}\ \Omega,\\
v|_{\partial\Omega}=u,
\end{cases}
\end{eqnarray}
respectively. Then for any $\delta\in (0,\beta_{N})$, we have
\begin{eqnarray}
\int_{\Omega}\exp\left\{\frac{(\beta_{N}-\delta)d_{0}^{\frac{1}{N-1}}(|u-v|)}
{||f||_{L^{1}(\Omega)}^{\frac{1}{N-1}}}\right\}\leq \frac{|\Omega|}{\delta},
\end{eqnarray}
where
$$d_{0}=\inf\{d_{X,Y}:X,Y\in \mathbb{R}^{N}, X\neq 0, Y\neq 0,X\neq Y\}$$ and $$d_{X,Y}=\frac{<F^{N-1}(X)F_{\xi}(X)-F^{N-1}(Y)F_{\xi}(Y),X-Y>}{F^{N}(X-Y)}.$$
\end{thm}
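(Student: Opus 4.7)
The strategy is to reduce the estimate to the argument of Theorem~\ref{thm-main1.1}, using the constant $d_0$ as an ellipticity factor so that $\|f\|_{L^1(\Omega)}$ is effectively replaced by $\|f\|_{L^1(\Omega)}/d_0$. Set $w:=u-v$; by the boundary condition on $v$ one has $w\in W_0^{1,N}(\Omega)$. For each $t>0$ and $h>0$, consider the Lipschitz truncation
\[
\phi_{t,h} \;:=\; \operatorname{sgn}(w)\,\min\bigl\{h,\,(|w|-t)^+\bigr\},
\]
which vanishes on $\partial\Omega$, satisfies $|\phi_{t,h}|\le h$, and has $\nabla\phi_{t,h}=\nabla w\cdot\chi_{\{t<|w|<t+h\}}$, so it is an admissible test function in both equations.

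Subtracting the weak formulations for $u$ and $v$ and testing against $\phi_{t,h}$ gives
\[
\int_{\{t<|w|<t+h\}}\!\!\bigl\langle F^{N-1}(\nabla u)F_\xi(\nabla u) - F^{N-1}(\nabla v)F_\xi(\nabla v),\,\nabla w\bigr\rangle\,dx \;=\; \int_\Omega f\,\phi_{t,h}\,dx.
\]
By the very definition of $d_0$ the left-hand integrand is pointwise bounded from below by $d_0\,F^N(\nabla w)$, while the right-hand side has absolute value at most $h\,\|f\|_{L^1(\Omega)}$ since $f>0$. Dividing by $h$ and sending $h\to0^+$, the coarea formula yields, for a.e.\ $t>0$,
\[
d_0 \int_{\{|w|=t\}} F^N(\nabla w)\,\frac{dH^{N-1}}{|\nabla w|} \;\le\; \|f\|_{L^1(\Omega)}.
\]

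Now set $\mu(t):=|\{|w|>t\}|$, so that $-\mu'(t)=\int_{\{|w|=t\}} dH^{N-1}/|\nabla w|$. Combining the anisotropic isoperimetric inequality (the convex symmetrization of Alvino--Ferone--Trombetti~\cite{AlvinoFeroneTrombetti}) with H\"older's inequality applied to the identity $F(\nu_w)=F(\nabla w)/|\nabla w|$, exactly as in the proof of Theorem~\ref{thm-main1.1}, one obtains
\[
N^N\,k\,\mu(t)^{N-1} \;\le\; (-\mu'(t))^{N-1}\,\frac{\|f\|_{L^1(\Omega)}}{d_0},
\]
where $k$ is the Lebesgue measure of the Wulff shape. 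Since $\beta_N = N^{N/(N-1)}k^{1/(N-1)}$, this is the linear differential inequality
\[
-\mu'(t) \;\ge\; \frac{\beta_N\, d_0^{1/(N-1)}}{\|f\|_{L^1(\Omega)}^{1/(N-1)}}\,\mu(t),
\]
whence $\mu(t)\le|\Omega|\exp\bigl\{-\beta_N d_0^{1/(N-1)} t/\|f\|_{L^1(\Omega)}^{1/(N-1)}\bigr\}$. Substituting this decay into the layer-cake expansion of $\int_\Omega\exp\{c|w|\}\,dx$ with $c=(\beta_N-\delta)d_0^{1/(N-1)}/\|f\|_{L^1(\Omega)}^{1/(N-1)}$ produces the bound in the same way as at the end of the proof of Theorem~\ref{thm-main1.1}.

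The only non-routine step is justifying the coarea/isoperimetric/H\"older chain on the level sets $\{|w|=t\}$; the singular set $\{\nabla w=0\}$ is handled by Sard's theorem together with a standard density argument, as in~\cite{RenWei}. Once this technicality is dispatched, the proof is essentially a verbatim copy of that of Theorem~\ref{thm-main1.1} with $\|f\|_{L^1(\Omega)}$ replaced by $\|f\|_{L^1(\Omega)}/d_0$, and this substitution is precisely the source of the factor $d_0^{1/(N-1)}$ in the exponent.
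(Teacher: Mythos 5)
Your argument is essentially the paper's: both use the level-set method on $w=u-v$, extract the ellipticity constant $d_0$ from the monotonicity of the vector field, and then combine the anisotropic isoperimetric inequality with the coarea formula and H\"older to derive the differential inequality $-\mu'(t)\ge \beta_N d_0^{1/(N-1)}\mu(t)/\|f\|_{L^1}^{1/(N-1)}$. The only difference is technical: the paper obtains the key per-level estimate by applying the divergence theorem on $\Omega_t$ directly (invoking Hopf's boundary lemma to ensure $\nabla w\neq 0$ on $\partial\Omega_t$), whereas you obtain it by testing against the Lipschitz truncation $\phi_{t,h}$ and passing $h\to 0^+$ via the coarea formula, which sidesteps the need for regularity of the level sets.
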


\begin{thm}\label{thm-main1.3}
Suppose that $\Omega\subset\mathbb{R}^{N}$, $N\geq 2$ is a bounded domain and $u_{n}$ is a sequence of weak solutions of
\begin{eqnarray}
-Q_{N}u_n=V_{n}(x)e^{u_n}& \text{in}\ \Omega,
\end{eqnarray}
where $V_{n}(x)\geq 0$, $||V_{n}||_{L^{q}}\leq C_{1}$ for some $1<q\leq\infty $ and $||e^{u_n}||_{L^{q'}}\leq C_{2}$. Then one of the following possibilities happens (after taking subsequences):

(i) $u_n$ is bounded in $L^{\infty}_{\text{loc}}(\Omega)$;

(ii) $u_n\rightarrow -\infty$ uniformly on any compact subset of $\Omega$;

(iii) $S=\{p_{1},\cdots,p_{m}\}$ is a nonempty, finite set and $u_n\rightarrow -\infty$ uniformly on any compact subset of $\Omega\backslash S$. In addition, $V_{n}e^{u_n}\rightharpoonup \sum_{i=1}^{m}\alpha_{i}\delta_{p_{i}}$ in the sense of measures on $\Omega$ with $\alpha_{i}\geq (\frac{\beta_{N}}{q'})^{N-1}d_{0}$ for any $i$,
where  the blow-up set $S$ is defined by
$$S:=\{x\in\Omega: \exists\ x_{n}\in \Omega\ \text{such that}\ x_{n}\rightarrow x \ \text{and}\ u_{n}(x_{n})\rightarrow +\infty\}.$$
\end{thm}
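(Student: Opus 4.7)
The plan is to follow the Brezis--Merle dichotomy, with Theorem \ref{thm-main1.2} serving as the $\varepsilon$-regularity estimate that replaces the linear Green-function decomposition of the original argument. Set $T_{\delta} := \bigl((\beta_{N}-\delta)/q'\bigr)^{N-1} d_{0}$ for $\delta>0$. The heart of the proof is the following local boundedness criterion: if $x_{0}\in\Omega$ and $r>0$ are such that $B:=B(x_{0},2r)\subset\subset\Omega$ and $\int_{B}V_{n}e^{u_{n}}\,dx < T_{\delta}$ for all large $n$, then $u_{n}$ is bounded in $L^{\infty}(B(x_{0},r))$ uniformly in $n$, so $x_{0}\notin S$.

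To prove this criterion, let $v_{n}$ be the $Q_{N}$-harmonic replacement of $u_{n}$ on $B$, i.e.\ $-Q_{N}v_{n}=0$ in $B$ with $v_{n}=u_{n}$ on $\partial B$. Applying Theorem \ref{thm-main1.2} to the pair $(u_{n},v_{n})$ on $B$ yields
\begin{equation*}
\int_{B}\exp\!\Bigl(\tfrac{(\beta_{N}-\delta)\,d_{0}^{1/(N-1)}\,|u_{n}-v_{n}|}{\|V_{n}e^{u_{n}}\|_{L^{1}(B)}^{1/(N-1)}}\Bigr)\,dx\ \leq\ \tfrac{|B|}{\delta}.
\end{equation*}
Under the smallness hypothesis the exponent strictly exceeds $q'$, hence $e^{|u_{n}-v_{n}|}$ is bounded in $L^{s}(B)$ for some $s>q'$; H\"older's inequality against the $L^{q}$ bound on $V_{n}$ then gives a uniform $L^{\rho}$ bound ($\rho>1$) for $V_{n}e^{u_{n}-v_{n}}$. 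To control $v_{n}$ itself, combine the weak maximum principle and the mean value property established in Section 2 with the $L^{1}$ bound on $u_{n}^{+}$ inherited from $e^{u_{n}}\in L^{q'}$; this produces a uniform upper bound for $v_{n}$ on $B(x_{0},3r/2)$. Writing $V_{n}e^{u_{n}}=e^{v_{n}}\cdot(V_{n}e^{u_{n}-v_{n}})$ then gives an $L^{\rho}$ bound on the right-hand side of $-Q_{N}u_{n}=V_{n}e^{u_{n}}$ on $B(x_{0},3r/2)$, and standard $L^{\infty}$ regularity for the $N$-anisotropic Laplacian delivers the desired estimate on $B(x_{0},r)$.

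The contrapositive says that every $p\in S$ carries at least mass $T_{\delta}$ in each small ball along a subsequence. Letting $\delta\to 0$ and using $\int_{\Omega}V_{n}e^{u_{n}}\leq C_{1}C_{2}$ forces $S=\{p_{1},\dots,p_{m}\}$ to be finite with $m\leq C_{1}C_{2}/\bigl((\beta_{N}/q')^{N-1}d_{0}\bigr)$, and a diagonal extraction produces the weak-$*$ limit $V_{n}e^{u_{n}}\rightharpoonup\mu$ with $\mu(\{p_{i}\})=\alpha_{i}\geq(\beta_{N}/q')^{N-1}d_{0}$. Off $S$ the sequence $\{u_{n}\}$ is bounded in $L^{\infty}_{\mathrm{loc}}(\Omega\setminus S)$, so after further extraction it converges in $C^{1}_{\mathrm{loc}}$ to some $u_{\infty}\in[-\infty,+\infty)$ solving $-Q_{N}u_{\infty}=Ve^{u_{\infty}}$ wherever finite. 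A Harnack inequality for the $N$-anisotropic Liouville equation, applied component-by-component of $\Omega\setminus S$, shows that either $u_{\infty}$ is locally finite there or $u_{n}\to-\infty$ uniformly on compacts; together with the (non)emptiness of $S$ this produces the three alternatives (i)--(iii) and matches the weak limit to $\sum_{i}\alpha_{i}\delta_{p_{i}}$ in case (iii). The principal obstacle is the nonlinearity of $Q_{N}$: no superposition splitting of $u_{n}$ is available, so the $\varepsilon$-regularity step must proceed through Theorem \ref{thm-main1.2} rather than Theorem \ref{thm-main1.1}, and this is precisely what forces the constant $d_{0}$ into the lower bound on $\alpha_{i}$.
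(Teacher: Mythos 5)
Your overall skeleton (measure dichotomy via an $\varepsilon$-regularity threshold coming from Theorem \ref{thm-main1.2}, finiteness of $S$ from the total mass bound, then a Harnack alternative off $S$) is the same as the paper's, so the architecture is sound. However, there are two concrete gaps in the execution.

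First, in the local boundedness criterion you propose to control the $Q_{N}$-harmonic replacement $v_{n}$ by ``the weak maximum principle and the mean value property established in'' the paper. The mean value property (Theorem 3.3) is \emph{not} a general property of $Q_{N}$-harmonic functions: it holds only under the restrictive structural compatibility condition \eqref{cond3.3} between $F$ and $F^{0}$, which is not among the hypotheses of Theorem \ref{thm-main1.3}. The paper instead controls $v_{n}$ by first applying the comparison principle (Theorem 3.2) to get $v_{n}\leq u_{n}$, hence $v_{n}^{+}\leq u_{n}^{+}$, then noting $u_{n}^{+}$ is bounded in $L^{N}_{\mathrm{loc}}$ from $\|e^{u_{n}}\|_{L^{q'}}\leq C_{2}$, and finally invoking Serrin's local a priori estimate for subsolutions of quasilinear equations to upgrade this to a local $L^{\infty}$ bound on $v_{n}^{+}$. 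That route requires no extra hypothesis on $F$; yours does.

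Second, your final paragraph does not actually rule out the ``bounded'' branch of the Harnack alternative when $S\neq\emptyset$. You assert ``Off $S$ the sequence $\{u_{n}\}$ is bounded in $L^{\infty}_{\mathrm{loc}}(\Omega\setminus S)$'' — but only $u_{n}^{+}$ is so bounded; the full sequence could equally well tend to $-\infty$, and indeed the theorem asserts it must. Moreover, if $S\neq\emptyset$ one must \emph{exclude} the case where a subsequence of $u_{n}$ stays bounded in $L^{\infty}_{\mathrm{loc}}(\Omega\setminus S)$. This is the delicate step in Brezis--Merle, and in the present anisotropic setting the paper handles it by comparing with the explicit fundamental solution $G(x)=(Nk)^{-1/(N-1)}\log\frac{1}{F^{0}(x-x_{0})}$ of $Q_{N}$ in a Wulff ball (Lemmas \ref{lem2.3}--\ref{lem2.4}) and deriving a contradiction with the $L^{q'}$ bound on $e^{u_{n}}$. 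Your write-up simply says the dichotomy ``together with the (non)emptiness of $S$ produces the three alternatives,'' which papers over exactly this point. Also note that the Harnack alternative in the paper is applied to $w_{n}=u_{n}-v_{n}$ where $v_{n}$ solves the Dirichlet problem with zero boundary data on all of $\Omega$ (so $-\widetilde{Q}_{N}w_{n}=0$), not directly to $u_{n}$; this is what lets one use the linearized operator's Harnack inequality from Serrin.
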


\begin{thm}\label{thm-main1.4}
Suppose that $\Omega\subset\mathbb{R}^{N}$, $N\geq 2$ is a bounded domain and $u_{n}$ is a sequence of weak solutions of (1.9) with $$\int_{\Omega}e^{u_n}\leq C.$$
Let $(V_{n})$ be a sequences of Lipschitz continuous functions satisfying
\begin{eqnarray}
V_{n}(x)\geq 0, V_{n}(x)\rightarrow V \text{uniformly in}\ C_{0}(\overline{\Omega}),\ ||\nabla V_{n}||_{L^{\infty}}\leq C.
\end{eqnarray}
In addition, suppose that
\begin{eqnarray}
\max_{\partial\Omega}(u_{n})-\min_{\partial\Omega}(u_{n})\leq C.
\end{eqnarray}
Then if blow-up happens only at one point, the blow-up value $\alpha=\left(\frac{N^{N+1}k^{\frac{1}{N-1}}}{N-1}\right)^{\frac{1}{N-1}}$.
\end{thm}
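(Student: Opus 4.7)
The plan is to identify $\alpha$ by applying a Pohozaev-type identity on a small Wulff ball around $p$, using the singularity of the $Q_N$-Green function to evaluate the boundary terms. By Theorem~\ref{thm-main1.3} together with the boundary oscillation bound (1.11), the single-blow-up hypothesis gives $u_n\to-\infty$ uniformly on compacta of $\overline{\Omega}\setminus\{p\}$ while $V_n e^{u_n}\rightharpoonup\alpha\delta_p$ weakly as measures, with $\alpha>0$. Let $F^o(x)=\sup\{x\cdot\xi:F(\xi)\le 1\}$ denote the dual Wulff norm and set $W_R=\{x:F^o(x-p)<R\}$. Using $F(\nabla F^o)=1$, $F_\xi(\nabla F^o(x))=x/F^o(x)$, Euler's formula $x\cdot\nabla F^o=F^o$, and the coarea identity $\int_{\partial W_R}dS/|\nabla F^o|=NkR^{N-1}$, one checks that the Green function of $-Q_N$ on $\Omega$ with pole at $p$ admits the expansion
\[
G(x,p)=-(Nk)^{-1/(N-1)}\log F^o(x-p)+H(x,p),
\]
with $H$ continuous near $p$.

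The core step is a Pohozaev identity. Multiply $-Q_N u_n=V_n e^{u_n}$ by $(x-p)\cdot\nabla u_n$ and integrate on $W_R$ with $\overline{W_R}\subset\Omega$. Using $F^{N-1}(\xi)F_\xi(\xi)\cdot\xi=F^N(\xi)$ and $\nabla_\xi F^N=NF^{N-1}F_\xi$, integration by parts yields
\[
\frac{1}{N}\int_{\partial W_R}(x-p)\cdot\nu\,F^N(\nabla u_n)\,dS - \int_{\partial W_R}F^{N-1}(\nabla u_n)F_\xi(\nabla u_n)\cdot\nu\,(x-p)\cdot\nabla u_n\,dS
\]
\[
=-N\int_{W_R}V_n e^{u_n}\,dx - \int_{W_R}(x-p)\cdot\nabla V_n\,e^{u_n}\,dx + \int_{\partial W_R}V_n(x-p)\cdot\nu\,e^{u_n}\,dS.
\]
Sending $n\to\infty$ with $R>0$ fixed and small, the bulk $V_n e^{u_n}$-integral tends to $\alpha$, the $\nabla V_n$-term is $O(R)$, and the right-most boundary integral vanishes because $u_n\to-\infty$ on $\partial W_R$. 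For the left-hand side one first shows, after subtracting a suitable sequence of constants, that $u_n\to\alpha G(\cdot,p)$ in $C^1_{\mathrm{loc}}(\overline{\Omega}\setminus\{p\})$; substituting $\nabla u_n\sim -\alpha(Nk)^{-1/(N-1)}\nabla F^o(x-p)/F^o(x-p)$ on $\partial W_R$, each boundary integral evaluates, via $x\cdot\nabla F^o=F^o$ and the coarea identity, to an explicit constant multiple of $\alpha^N$ that is independent of $R$. Sending $R\to 0$ leaves an algebraic equation for $\alpha^{N-1}$ in $N$ and $k$, which rearranges to the stated value.

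The main obstacle is the $C^1_{\mathrm{loc}}$ convergence $u_n-c_n\to\alpha G(\cdot,p)$ on $\overline{\Omega}\setminus\{p\}$: the quasilinearity of $Q_N$ rules out a linear Green-function decomposition of $u_n$ into a singular and a regular part. One must instead combine the comparison principle together with the exponential a priori estimate of Theorem~\ref{thm-main1.2} (where (1.11) absorbs the boundary contribution into the $Q_N$-harmonic extension $v_n$), an anisotropic Harnack inequality for $-Q_N$, and interior $C^{1,\alpha}$ regularity for quasilinear anisotropic operators, to upgrade the weak concentration $V_n e^{u_n}\rightharpoonup\alpha\delta_p$ first to local uniform and then to $C^1_{\mathrm{loc}}$ convergence on $\overline{\Omega}\setminus\{p\}$. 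With this reduction secured, the Pohozaev boundary evaluation is a routine exercise in Wulff calculus.
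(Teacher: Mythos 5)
Your high-level plan coincides with the paper's: localize around the blow-up point $p$, write a Pohozaev identity on a small Wulff ball $\mathcal W_R(p)$, use the singular expansion $G=-\left(Nk\right)^{-1/(N-1)}\log F^{0}(\cdot-p)+h$ of the anisotropic Green function to evaluate the boundary integrals in the limit $n\to\infty$, $R\to0$, and solve the resulting algebraic equation for $\alpha$. Your Pohozaev identity (obtained by pairing $-Q_Nu_n=V_ne^{u_n}$ with $(x-p)\cdot\nabla u_n$) is correct, and the treatment of the right-hand side is sound: the bulk term tends to $-N\alpha$, the $\nabla V_n$ term is $O(R)$ by the $L^\infty$ bound on $\nabla V_n$ and the uniform $L^1$ bound on $e^{u_n}$, and the outer boundary term vanishes because $u_n\to-\infty$ uniformly on $\partial\mathcal W_R$.

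The step you flag as the ``main obstacle'' is exactly the content of the paper's argument, and your sketch does not supply it. The paper's resolution is the decomposition $u_n=v_n+z_n+\min_{\partial\Omega}u_n$, where $v_n$ solves $-Q_Nv_n=V_ne^{u_n}$ with $v_n|_{\partial\Omega}=0$ and $z_n$ solves $-\widetilde Q_Nz_n=0$ with $z_n|_{\partial\Omega}=u_n-\min_{\partial\Omega}u_n$, which is uniformly bounded by (1.11). Uniformly elliptic estimates give $\|z_n\|_{L^\infty}+\|\nabla z_n\|_{L^\infty}\le C$ and convergence $z_n\to z$ in $C^1_{\mathrm{loc}}$; Corollary~4.4 applied away from $p$ together with interior $C^{1,\beta}$ regularity gives $v_n\to G$ strongly in $C^{1,\beta}$ on compacta of $\overline\Omega\setminus\{p\}$, with $-Q_NG=\alpha\delta_p$. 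The paper then applies Pohozaev to $v_n$ (with right-hand side $Z_ne^{v_n}$, $Z_n=V_ne^{z_n+\min_{\partial\Omega}u_n}$), which in addition requires the preliminary rescaling/Liouville argument showing $V(0)>0$ so that $\nabla\log Z_n$ is bounded. Your variant — Pohozaev applied directly to $u_n$ — is actually slightly simpler on the right-hand side, since it uses $\nabla V_n$ rather than $\nabla\log Z_n$ and so avoids needing $V(0)>0$; but on the left-hand side you still need to know $\nabla u_n=\nabla v_n+\nabla z_n$ on $\partial\mathcal W_R$, hence you still need the decomposition.

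Two inaccuracies in your sketch worth fixing. First, $u_n-\min_{\partial\Omega}u_n\to G+z$ in $C^1_{\mathrm{loc}}(\overline\Omega\setminus\{p\})$ with a nontrivial regular part $z$; the boundary integrals therefore carry $z$-dependent correction terms that one must show are $o(1)$ as $R\to0$ (they are, since $\nabla z$ is bounded while $\nabla G\sim R^{-1}$ on $\partial\mathcal W_R$), and this should be written down. Second, ``$u_n-c_n\to\alpha G(\cdot,p)$'' with $G$ normalized so that $-Q_NG=\delta_p$ is not consistent with the $(N-1)$-homogeneity of $Q_N$, since $-Q_N(\alpha G)=\alpha^{N-1}\delta_p$; the correct object is the Green function normalized so that $-Q_NG=\alpha\delta_p$, as in the paper's (6.8)--(6.9). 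This is a bookkeeping point, but getting the constant wrong would change the final value of $\alpha$.
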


This paper is organized as follows. In Section 2, we sum up the properties related to $F$ and recall some lemmas which will be used in the proof of main results. In section 3, we give the weak maximal principle, the comparison principle and the mean value property for $Q_{N}$. Brezis-Merle type concentraton-compactness formula and a priori estimate is arranged in Section 4. In Section 5 and Section 6, we give the proof of blow-up Theorem and Theorem 1.4 respectively.

\section{Preliminaries}

In this section, let us recall some concepts and properties related to $F$.

Let $F:\ \mathbb{R}^{N}\rightarrow [0,\infty)$ be a convex function of $C^{2}(\mathbb{R}^{N}\backslash \{0\})$, which is even and positively homogeneous of degree $1$, i.e. for any $t\in \mathbb{R}, \xi \in \mathbb{R}^{N}$, \begin{equation}
F(t\xi)=|t|F(\xi).
\end{equation}

 We also assume that $F(\xi)>0$ for any $\xi \neq0$, and $Hess(F^{2})$ is positive definite in $\mathbb{R}^{N}\backslash \{0\}$.
With the help of homogeneity of $F$, there exist two constant $0<a\leq b <\infty$, such that
\begin{equation}
a|\xi|\leq F(\xi)\leq b|\xi| \  \text {for any}\ \xi\in \mathbb{R}^{N}.\end{equation}

Consider the map $\Phi:\ \mathbb{S}^{N-1}\rightarrow \mathbb{R}^{N},\ \ \Phi(\xi)=F_{\xi}(\xi)$. Its image $\Phi(\mathbb{S}^{N-1})$ is a smooth, convex hypersurface in $\mathbb{R}^{N}$, which is called the Wulff shape of $F$.

If one defines the support function of $F$ as $F^{0}(x):=\sup _{\xi\in K}<x,\xi>$, where $K:=\{x\in \mathbb{R}^{N}:
 F(x)<1\}$, then it is easy to prove that $F^{0}: \mathbb{R}^{N}\rightarrow [0,\infty)$ is also a conves, homogeneous function and $F, F^{0}$ are polar to each other in the sense that
 $$F^{0}(x)=\sup_{\xi\neq 0}\frac{<x,\xi>}{F(x)}\ \text {and}\  F(\xi)=\sup_{x\neq 0}\frac{<x,\xi>}{F^{0}(x)}.$$

Let $\mathcal{W}_{F}:=\{x\in \mathbb{R}^{N}: F^{0}(x)\leq 1\}$ and $k=k_{N}=|\mathcal{W}_{F}|$, which is the Lebesgue measure of $\mathcal{W}_{F}$. Also, denote $\mathcal{W}_{r}(x_{0})$ by the Wulff ball of center at $x_{0}$ with radius $r$, i.e. $\mathcal{W}_{r}(x_{0})=\{x\in \mathbb{R}^{N}: F^{0}(x-x_{0})\leq r\}$.

  Next, we summarize the properties on $F$ and $F^{0}$, which can be proved easily by the assumption on $F$. See \cite{BellettiniPaolini,VFeroneBKawohl,WangCXia1}.

\begin{prop}\label{prop:eg}

We have the following properties:

(1)\quad$|F(x)-F(y)|\leq F(x+y)\leq F(x)+F(y)$;

(2)\quad$|\nabla F(x)|\leq C$\ for any $x\neq 0$;

(3)\quad $<\xi,\nabla F(\xi)>=F(\xi),\ <x,\nabla F^{0}(x)>=F^{0}(x)$ for any $x\neq 0,\ \xi\neq 0$;

(4)\quad$F(\nabla F^{0}(x))=1,\ F^{0}(\nabla F(\xi))=1$;

(5)\quad$ F_{\xi_{i}}(t\xi)=\text{sgn} (t) F_{\xi_{i}}(\xi)$;

(6) \quad $F^{0}(x)F_{\xi}(\nabla F^{0}(x))=x_{i}$.
\end{prop}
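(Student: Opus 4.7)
The plan is to derive all six identities directly from the three standing hypotheses on $F$—positive one-homogeneity, evenness, and convexity with $\mathrm{Hess}(F^{2})$ positive definite off the origin—together with the defining sup formula $F^{0}(x)=\sup_{\xi\neq 0}\langle x,\xi\rangle/F(\xi)$. Items (1), (2), (3), (5) are routine consequences of homogeneity plus convexity; the real content lies in (4) and (6), which encode the $F\leftrightarrow F^{0}$ polarity.

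For (1), I would get $F(x+y)\le F(x)+F(y)$ from $F(x+y)=2F(\tfrac{x+y}{2})\le F(x)+F(y)$, using convexity plus one-homogeneity, and then deduce the reverse triangle inequality by applying subadditivity to $F(x)=F((x+y)+(-y))$ together with evenness $F(-y)=F(y)$. For (2), differentiating $F(t\xi)=tF(\xi)$ ($t>0$) in $\xi_{i}$ shows $F_{\xi_{i}}$ is zero-homogeneous, so $|\nabla F|$ is continuous on the compact sphere $\mathbb{S}^{N-1}$ and attains a finite maximum $C$. For (5), the chain rule applied to $F(t\xi)=|t|F(\xi)$ in $\xi_{i}$ gives $tF_{\xi_{i}}(t\xi)=|t|F_{\xi_{i}}(\xi)$, i.e. $F_{\xi_{i}}(t\xi)=\mathrm{sgn}(t)F_{\xi_{i}}(\xi)$. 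For (3), Euler's identity from differentiating $F(t\xi)=tF(\xi)$ in $t$ at $t=1$ gives $\langle\xi,\nabla F(\xi)\rangle=F(\xi)$, and the same argument applied to the one-homogeneous $F^{0}$ yields $\langle x,\nabla F^{0}(x)\rangle=F^{0}(x)$.

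The heart of the argument is (4) and (6). Fix $x\neq 0$. The lower bound $a|\xi|\le F(\xi)$ from (2.2) makes $\langle x,\xi\rangle/F(\xi)$ a continuous zero-homogeneous function on $\mathbb{R}^{N}\setminus\{0\}$, so the supremum defining $F^{0}(x)$ is attained on the compact set $\{F(\xi)=1\}$ at some $\xi^{*}$. The Lagrange condition $x=\lambda\nabla F(\xi^{*})$, combined with Euler's identity $\langle\xi^{*},\nabla F(\xi^{*})\rangle=F(\xi^{*})=1$, forces $\lambda=\langle x,\xi^{*}\rangle=F^{0}(x)$, so $x=F^{0}(x)\nabla F(\xi^{*})$. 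Strict convexity of $F$ on the level set $\{F=1\}$, which follows from positive definiteness of $\mathrm{Hess}(F^{2})$, makes the maximizer unique, and comparing with the Euler identity $\langle x,\nabla F^{0}(x)\rangle=F^{0}(x)$ (where $\nabla F^{0}(x)$ lies on $\{F=1\}$ by construction once (4) is in hand) identifies $\xi^{*}=\nabla F^{0}(x)$. Substituting gives $x_{i}=F^{0}(x)F_{\xi_{i}}(\nabla F^{0}(x))$, which is (6). Taking $F^{0}$ of both sides of $x=F^{0}(x)\nabla F(\nabla F^{0}(x))$ and using one-homogeneity yields $F^{0}(\nabla F(\nabla F^{0}(x)))=1$; running the symmetric argument with the roles of $F$ and $F^{0}$ swapped (which is legitimate because polarity is involutive for $1$-homogeneous convex functions, $F^{00}=F$) supplies $F(\nabla F^{0}(x))=1$, completing (4).

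The main obstacle is (4)/(6): one must simultaneously show that the sup defining $F^{0}$ is attained, that the maximizer is unique, and that it coincides with $\nabla F^{0}(x)$. Attainment uses compactness via the bound (2.2); uniqueness is exactly where the strict convexity hypothesis $\mathrm{Hess}(F^{2})>0$ intervenes decisively; and the identification of the maximizer as $\nabla F^{0}(x)$ requires the Euler identity for $F^{0}$ together with the involutive nature of the polar transform, both of which must be set up carefully to avoid circularity between (3), (4), and (6).
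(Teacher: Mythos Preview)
The paper does not actually prove this proposition: the sentence immediately preceding it reads ``we summarize the properties on $F$ and $F^{0}$, which can be proved easily by the assumption on $F$. See \cite{BellettiniPaolini,VFeroneBKawohl,WangCXia1},'' and no argument is given in the text. Your proposal therefore supplies considerably more than the paper itself, and the route you outline---homogeneity/convexity for (1)--(3),(5), and a Lagrange-multiplier/polar-duality argument for (4),(6)---is the standard one found in the cited references.

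Your arguments for (1), (2), (3), (5) are clean and correct. For (4) and (6) the strategy is right, but the one step you yourself flag as delicate is indeed the only real issue: identifying the maximizer $\xi^{*}$ with $\nabla F^{0}(x)$. As written, you invoke $F(\nabla F^{0}(x))=1$ in the course of proving (4), which is circular. The non-circular way is to use Danskin's theorem (or the envelope argument): once the maximizer $\xi^{*}=\xi^{*}(x)$ on $\{F=1\}$ is unique by strict convexity, the value function $F^{0}(x)=\langle x,\xi^{*}(x)\rangle$ is differentiable with $\nabla F^{0}(x)=\xi^{*}(x)$, since the derivative of $\xi^{*}$ in $x$ contributes nothing by the first-order optimality condition. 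With that identification in hand, $F(\nabla F^{0}(x))=F(\xi^{*})=1$ gives (4), and $x=F^{0}(x)\nabla F(\xi^{*})=F^{0}(x)\nabla F(\nabla F^{0}(x))$ gives (6) immediately. If you tighten that one paragraph, the proof is complete.
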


Next we give a co-area formula and isoperimetric inequality in the anisotropic situation. One can refer to \cite{AlvinoFeroneTrombetti,IFonsecaSMuller}.

For a bounded domain $\Omega\subset \mathbb{R}^{N}$ and a function of bounded variation $u\in BV(\Omega)$, denote the anisotropic bounded variation of $u$ with respect to $F$ by
$$\int_{\Omega}|\nabla u|_{F}=\sup\left\{\int_{\Omega}u\text{div}\sigma dx: \sigma \in C_{0}^{1}(\Omega),F^{0}(\sigma)\leq 1\right\},$$
and anisotropic perimeter of $E$  with respect to $F$ by
$$P_{F}(E):=\int_{\Omega}|\nabla \chi_{E}|_{F},$$
 where $E$ is a subset of $\Omega$ and $\chi_{E}$ is the characeristic function of $E$.
The co-area formula and isoperimetric inequality can be expressed by
\begin{equation}
\int_{\Omega}|\nabla u|_{F}=\int_{0}^{\infty}P_{F}(|u|>t)dt,
\end{equation}
and
\begin{equation}
P_{F}(E)\geq Nk^{\frac{1}{N}}|E|^{1-\frac{1}{N}}
\end{equation}
respectively.

Let us review the convex symmetrization which is the generalization of the Schwarz symmetrization (see \cite{GTalenti}). The one-dimensional decreasing rearrangement of $u$ is
$$u^{*}(t)=\sup \{s\geq 0: |\{x\in \Omega: |u(x)|>s\}|>t\},$$
for $t\in \mathbb{R}$. The convex symmetrization of $u$ is defined as
$$u^{\star}(x)=u^{*}(kF^{0}(x)^{N}), \ \text{for}\  x\in \Omega^{\star},$$
where $\Omega^{\star}$ is the homothetic Wulff ball centered at the origin having the same measure as $\Omega$.

Next, let us give some lemmas which will be important in proving main results of this paper.
\begin{lem}\label{lem2.1}
(see \cite{AlvinoFeroneTrombetti}) Let $u\in W_{0}^{1,p}(\Omega)$ for $p\geq 1$. Then $u^{\star}\in W_{0}^{1,p}(\Omega^{\star})$ and
$$\int_{\Omega^{\star}}F^{p}(\nabla u^{\star})dx\leq\int_{\Omega}F^{p}(\nabla u)dx .$$
\end{lem}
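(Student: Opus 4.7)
The plan is to establish the anisotropic Pólya--Szegő inequality by the classical rearrangement argument, combining the anisotropic co-area formula (2.3) and isoperimetric inequality (2.4) with Hölder's inequality applied level-set by level-set.

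First I would introduce the distribution function $\mu(t):=|\{x\in\Omega:|u(x)|>t\}|$, which is non-increasing on $[0,\infty)$. By the very definition of convex symmetrization, $u^\star$ has distribution function $\mu$ as well; this gives the measurability and $L^p$ membership needed for $u^\star\in W_0^{1,p}(\Omega^\star)$ once the gradient bound is in hand, noting that $u\in W_0^{1,p}(\Omega)$ forces $u^*(|\Omega|)=0$ and hence $u^\star=0$ on $\partial\Omega^\star$.

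Next, using the anisotropic co-area formula and the $1$-homogeneity of $F$, I would write
$$\int_\Omega F^p(\nabla u)\,dx=\int_0^\infty\!\!\left(\int_{\{|u|=t\}}\!\!\frac{F^p(\nabla u)}{|\nabla u|}\,d\mathcal{H}^{N-1}\right) dt,\qquad P_F(\{|u|>t\})=\int_{\{|u|=t\}}\!\!\frac{F(\nabla u)}{|\nabla u|}\,d\mathcal{H}^{N-1}.$$
Applying Hölder on the inner integral with exponents $(p,\frac{p}{p-1})$, combined with the layer-cake identity $\int_{\{|u|=t\}}|\nabla u|^{-1}\,d\mathcal{H}^{N-1}=-\mu'(t)$ and the anisotropic isoperimetric inequality $P_F(\{|u|>t\})\geq Nk^{1/N}\mu(t)^{1-1/N}$, yields
$$\int_{\{|u|=t\}}\frac{F^p(\nabla u)}{|\nabla u|}\,d\mathcal{H}^{N-1}\;\geq\;\frac{(Nk^{1/N})^{p}\,\mu(t)^{p(N-1)/N}}{(-\mu'(t))^{p-1}}.$$
Integrating in $t$ then gives a lower bound for $\int_\Omega F^p(\nabla u)\,dx$ depending only on $\mu$.

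Finally, I would show that for the convex symmetrization $u^\star(x)=u^*(kF^0(x)^N)$ equality holds throughout. Its super-level sets are precisely the Wulff balls $\mathcal{W}_{r(t)}$ with $kr(t)^N=\mu(t)$, so the isoperimetric step is sharp; on each such level set $\nabla u^\star$ is parallel to $\nabla F^0$, and by Proposition~\ref{prop:eg}(4) we have $F(\nabla F^0)=1$, so $F(\nabla u^\star)/|\nabla u^\star|$ is constant along the level set and Hölder becomes equality. Using also $\int_{\{F^0=r\}}|\nabla F^0|^{-1}\,d\mathcal{H}^{N-1}=\tfrac{d}{dr}|\mathcal{W}_r|=Nkr^{N-1}$ and the chain rule $\nabla u^\star=(u^*)'(kF^0{}^N)\cdot kNF^0{}^{N-1}\nabla F^0$, a direct computation produces
$$\int_{\Omega^\star}F^p(\nabla u^\star)\,dx\;=\;\int_0^\infty\frac{(Nk^{1/N})^p\,\mu(t)^{p(N-1)/N}}{(-\mu'(t))^{p-1}}\,dt\;\leq\;\int_\Omega F^p(\nabla u)\,dx,$$
as required.

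The main obstacle is the rigorous justification of the co-area formula and the identity $\int_{\{|u|=t\}}|\nabla u|^{-1}\,d\mathcal{H}^{N-1}=-\mu'(t)$ for a general Sobolev function $u$, particularly on the set where $\nabla u=0$ and at critical values of $u$. The standard remedy is to approximate $u$ by smooth functions, invoke Sard's theorem to restrict to regular levels, and then pass to the limit using lower semicontinuity of the anisotropic total variation; this is technical but routine.
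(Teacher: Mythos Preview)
The paper does not prove Lemma~\ref{lem2.1} at all; it is quoted from \cite{AlvinoFeroneTrombetti} without argument. Your outline is essentially the proof given in that reference: bound $\int_\Omega F^p(\nabla u)$ from below, level-set by level-set, using the co-area identity, H\"older, and the anisotropic isoperimetric inequality, obtaining a functional of the distribution function $\mu$ alone; then check by direct computation that $u^\star$ saturates this lower bound because its super-level sets are Wulff balls (so (2.4) is an equality) and $F(\nabla F^{0})=1$ makes the H\"older step sharp. The chain of inequalities and the calculation for $u^\star$ are correct as you wrote them. The only delicate point is the one you already flag---justifying the co-area and layer-cake identities for a general $W_0^{1,p}$ function, in particular on the set $\{\nabla u=0\}$---and the remedy you propose (smooth approximation plus lower semicontinuity of the anisotropic total variation) is exactly how \cite{AlvinoFeroneTrombetti} handles it.
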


\begin{lem}\label{lem2.2}
(see  \cite{AlvinoFeroneTrombetti}) Let $u,v\in W_{0}^{1,2}(\Omega)$ be the weak solutions of the following equations
\begin{eqnarray}
\begin{cases}
 -\text{div}(a(x,u,\nabla u))=f(x)& \text{in}\ \Omega,\\
u|_{\partial\Omega}=0,
\end{cases}
\end{eqnarray}
and

\begin{eqnarray}\label{2.6}
\begin{cases}
 -Q_{N}v=f^{\star}(x)& \text{in}\ \Omega^{\star},\\
v|_{\partial\Omega^{\star}}=0,
\end{cases}
\end{eqnarray}
respectively, where $f\in L^{\frac{2N}{N+2}}(\Omega)$ if $N\geq 3$ or $f\in L^{p}(\Omega)$, $p>1$ if $N=2$.
Suppose that $a(x,\eta,\xi)$ is vector-value Carath\'{e}odory function satisfying
$$<a(x,\eta,\xi),\xi>\geq F^{2}(\xi) \ a.e.\ x\in\Omega,\ \eta\in \mathbb{R},\ \xi\in \mathbb{R}^{N}.$$
Then it follows that
$$u^{\star}\leq v\  \text{in}\ \Omega^{\star}.$$
\end{lem}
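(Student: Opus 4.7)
The plan is to adapt Talenti's classical symmetrization method to the anisotropic setting, replacing Euclidean balls and perimeter by Wulff shapes and the anisotropic perimeter $P_{F}$. Set $\mu(t):=|\{x\in\Omega:|u(x)|>t\}|$, so that the decreasing rearrangement satisfies $u^{*}(s)=\inf\{t\ge 0:\mu(t)\le s\}$ for $s\in(0,|\Omega|)$.

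The first step is to derive a pointwise differential inequality for $u^{*}$. Testing the weak formulation of the equation for $u$ against the Lipschitz truncation $\phi_{t,h}(x):=\mathrm{sgn}(u(x))\min\{(|u(x)|-t)_{+},h\}$, whose gradient is supported on $\{t<|u|<t+h\}$ and equals $\mathrm{sgn}(u)\nabla u$ there, then invoking the coercivity of $a$ (with exponent matching $Q_{N}$) together with the Hardy--Littlewood inequality, and finally dividing by $h$ and sending $h\downarrow 0$, produces
\[
-\frac{d}{dt}\!\int_{\{|u|>t\}}\!F^{N}(\nabla u)\,dx \;\le\; \int_{0}^{\mu(t)}f^{*}(\sigma)\,d\sigma.
\]
The anisotropic coarea formula (2.3), combined with Hölder's inequality applied to the measure $d\mathcal{H}^{N-1}/|\nabla u|$ on each level set, gives
\[
P_{F}(\{|u|>t\})^{N}\;\le\;\Bigl(-\tfrac{d}{dt}\!\int_{\{|u|>t\}}\!F^{N}(\nabla u)\,dx\Bigr)(-\mu'(t))^{N-1},
\]
and chaining this with the anisotropic isoperimetric inequality (2.4), $P_{F}(\{|u|>t\})\ge Nk^{1/N}\mu(t)^{1-1/N}$, then changing variables via $s=\mu(t)$, $t=u^{*}(s)$, yields an upper bound for $-(u^{*})'(s)$ expressed purely in terms of $f^{*}$, $s$, $N$, and $k$.

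The second step is to observe that applying the same procedure to $v$ on $\Omega^{\star}$ produces equality throughout. The key point is that $v$ must itself be convex-symmetric: the data $f^{\star}$ depends on $x$ only through $F^{0}(x)$, and the variational functional $w\mapsto\tfrac{1}{N}\!\int F^{N}(\nabla w)\,dx-\int f^{\star}w\,dx$ minimized by $v$ is non-increased under convex symmetrization by Lemma~2.1 and Hardy--Littlewood, so uniqueness (from strict monotonicity of $Q_{N}$ established in Section~3) forces $v=v^{\star}$. Writing $v(x)=V(F^{0}(x))$ for a decreasing $V$, the level sets of $v$ are concentric Wulff balls (saturating the isoperimetric inequality), and $F(\nabla v)/|\nabla v|$ is constant on each level set (saturating Hölder), so $-(v^{*})'(s)$ equals exactly the right-hand side of the bound obtained for $-(u^{*})'(s)$. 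Integrating $-(u^{*})'(s)\le-(v^{*})'(s)$ from $s$ to $|\Omega|=|\Omega^{\star}|$ and using the boundary conditions $u^{*}(|\Omega|)=v^{*}(|\Omega^{\star}|)=0$ gives $u^{*}(s)\le v^{*}(s)$ for all $s$, and since $v^{\star}=v$ we conclude
\[
u^{\star}(x)=u^{*}(kF^{0}(x)^{N})\le v^{*}(kF^{0}(x)^{N})=v(x) \qquad \text{for a.e.\ }x\in\Omega^{\star}.
\]

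The main obstacle will be rigorously justifying Step~2: that $v$ inherits the full convex symmetry of $f^{\star}$, and that the derivation of the pointwise ODE for $v^{*}$ is sharp at every $s$. I would carry this out by constructing the candidate solution $\bar v(x):=V(F^{0}(x))$ from the ODE arising in Step~1 written as an equality, verifying $-Q_{N}\bar v=f^{\star}$ directly via properties (3)--(6) of Proposition~2.3 (in particular $F(\nabla F^{0})=1$), and then invoking uniqueness to identify $\bar v=v$; possible non-smoothness of $\bar v$ at the origin is handled by approximating $\Omega^{\star}$ by concentric Wulff annuli. A secondary technical point is that $\mu$ is only BV, so the derivations above must be read at Lebesgue points of $\mu$ and combined with the absolute continuity of $u^{*}$, and the case where level sets of $u$ have positive measure (producing plateaus in $u^{*}$) is dealt with by the standard regularization of $u$.
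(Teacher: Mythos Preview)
The paper does not supply a proof of this lemma; it is quoted directly from \cite{AlvinoFeroneTrombetti}. Your outline is exactly the convex-symmetrization comparison argument carried out in that reference (the anisotropic analogue of Talenti's method), so in substance your approach coincides with the source the paper cites.

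One remark worth recording: you were right to insert the parenthetical ``with exponent matching $Q_{N}$''. For the comparison with the solution of $-Q_{N}v=f^{\star}$ to go through, the coercivity hypothesis must be $\langle a(x,\eta,\xi),\xi\rangle\ge F^{N}(\xi)$, not $F^{2}(\xi)$; the quadratic exponent in the paper's statement is a transcription slip (the $N=2$ case of the result in \cite{AlvinoFeroneTrombetti}), whereas the application in the proof of Theorem~1.1 uses $a=F^{N-1}F_{\xi}$ and hence needs the $F^{N}$ version. With that correction your derivation of $-\tfrac{d}{dt}\int_{\{|u|>t\}}F^{N}(\nabla u)\le\int_{0}^{\mu(t)}f^{*}$ and the subsequent isoperimetric/H\"older chain is the standard one, and your plan for Step~2 (build the radial candidate from the ODE and invoke uniqueness) is the cleanest way to show $v$ is convex-symmetric.
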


\begin{lem}\label{lem2.3}
(see \cite{WangCXia1}) Assume u satisfies $-Q_{N}u=0$ in $\Omega\backslash\{0\}$ such that $u(x)/\Gamma(x)$
remains bounded in some neighborhood of 0. Then there exists a real number $\gamma$ and
$h\in C^{0}(\Omega)$ such that
\begin{equation}
u=\gamma \Gamma +h.
\end{equation}

Moreover, when $\gamma\neq0$, the following relation holds
\begin{equation}
\lim_{x\rightarrow 0}F^{0}(x)\nabla h(x)=0
\end{equation}
and u satisfies $-Q_{N}u=\gamma \delta_{0}$ in the sense of measures in $\Omega$, where
\begin{equation}
\Gamma(x)=-\frac{1}{(Nk)^{\frac{1}{N-1}}}\log F^{0}(x).
\end{equation}

\end{lem}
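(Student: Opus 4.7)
My plan is to adapt the Kichenassamy--V\'eron framework for isolated singularities of quasilinear operators to the anisotropic $Q_N$, using the structure of $F, F^0$ from Proposition 2.1. First I would verify that $\Gamma$ is the anisotropic fundamental solution. Direct differentiation gives $\nabla\Gamma = -(Nk)^{-1/(N-1)}\nabla F^0/F^0$; properties (4), (5), (6) then produce, in succession, $F(\nabla\Gamma) = (Nk)^{-1/(N-1)}/F^0$, $F_\xi(\nabla\Gamma) = -F_\xi(\nabla F^0)$, and
$$F^{N-1}(\nabla\Gamma)\,F_\xi(\nabla\Gamma) = -\frac{x}{Nk\,(F^0(x))^N}.$$
Taking divergence and invoking $x\cdot\nabla F^0 = F^0$ (property (3)) shows this vector field is divergence-free off the origin, so $-Q_N\Gamma = 0$ in $\mathbb{R}^N\setminus\{0\}$; integrating over a Wulff ball and using the coarea identity $\int_{\partial\mathcal{W}_\epsilon}d\sigma/|\nabla F^0|=Nk\epsilon^{N-1}$ yields the expected point mass at $0$.

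Next I would extract the constant $\gamma$ as an anisotropic flux. Since $-Q_Nu = 0$ on $\Omega\setminus\{0\}$, the divergence theorem applied on annular Wulff regions shows that
$$\gamma := -\int_{\partial\mathcal{W}_r} \langle F^{N-1}(\nabla u)F_\xi(\nabla u),\nu\rangle\,d\sigma$$
is independent of $r$. The hypothesis $|u/\Gamma|\leq C$ near $0$, combined with interior $C^{1,\alpha}$ regularity for the translated rescalings $v_r(x) := u(rx) + c_r$ on unit Wulff annuli, yields the gradient bound $F(\nabla u)(x)\leq C/F^0(x)$, so $\gamma$ is finite. By the fundamental solution identity, $\gamma$ is the unique possible coefficient for a logarithmic singular part.

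Set $h := u - \gamma\Gamma$; the core task is showing $h$ extends continuously across $0$. Because $Q_N$ is nonlinear, $h$ is not itself $Q_N$-harmonic and a direct removable-singularity argument is unavailable; I would instead carry out a blow-down / tangent-map analysis. A direct check gives $Q_N[u(r\cdot)](x) = r^N(Q_Nu)(rx)$, so the operator is scale invariant on solutions, while $\Gamma(r\cdot)$ differs from $\Gamma$ only by an additive constant. Hence the translated rescalings $u_r(x) := u(rx)-u(rx_0)$, for a fixed $x_0\in\partial\mathcal{W}_1$, are $Q_N$-harmonic on $\mathcal{W}_{R/r}\setminus\{0\}$, uniformly bounded by $C|\log F^0|$, with gradient controlled by $C/F^0$, and carry the same flux $\gamma$. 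Local compactness yields a subsequential limit $u_0$ that is $Q_N$-harmonic on $\mathbb{R}^N\setminus\{0\}$, has logarithmic growth, and flux $\gamma$; the anisotropic Liouville classification of Wang--Xia then forces $u_0 = \gamma\Gamma + \text{const}$. Since $\gamma$ is subsequence-independent, the full limit exists and $u(x)/\Gamma(x)\to \gamma$, so $h$ has sublogarithmic growth near $0$; a second iteration of the blow-down argument with this improved bound produces $h(x)\to h(0)$, giving the decomposition and continuity.

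For $\gamma\neq 0$, upgrading the tangent-map convergence to the gradient level shows $r\nabla u(rx)\to\gamma\nabla\Gamma(x)$ uniformly on $\partial\mathcal{W}_1$, which rescales precisely to $F^0(x)\nabla h(x)\to 0$. The distributional identity $-Q_Nu = \gamma\delta_0$ then follows by testing against $\varphi\in C_c^\infty(\Omega)$, integrating by parts on $\Omega\setminus\mathcal{W}_\epsilon$, and passing $\epsilon\to 0$ using the sharp gradient asymptotics to identify the boundary contribution on $\partial\mathcal{W}_\epsilon$ as $\gamma\varphi(0)$ via the same coarea calculation as in Step 1. \textbf{The main obstacle} is the tangent-map step: for a nonlinear operator, proving that every rescaled limit is a constant-shifted multiple of $\Gamma$ with the universal coefficient $\gamma$ requires both a Liouville-type classification of log-growing $Q_N$-harmonic functions on punctured $\mathbb{R}^N$ and uniform $C^{1,\alpha}$ estimates strong enough to pass to the limit in the divergence-form principal part.
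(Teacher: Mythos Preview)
The paper does not prove this lemma; it is quoted verbatim from Wang--Xia \cite{WangCXia1}, as the parenthetical ``(see \cite{WangCXia1})'' indicates. There is therefore no in-paper proof to compare against, and your proposal is effectively a reconstruction of the argument in that reference.

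As such a reconstruction, your outline is sound and follows the Kichenassamy--V\'eron strategy that Wang--Xia themselves adapt: verify $\Gamma$ is the fundamental solution, use scale invariance of $Q_N$ to run a blow-down on annuli, classify the tangent map via an anisotropic Liouville theorem, and upgrade to $C^1$ convergence to obtain both $h\in C^0$ and the gradient asymptotics. You correctly flag the Liouville classification of log-growing $Q_N$-harmonic functions on $\mathbb{R}^N\setminus\{0\}$ as the nontrivial ingredient.

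One correction is needed in your flux step. The boundary integral you write down does not recover $\gamma$ but rather $|\gamma|^{N-2}\gamma$: if $u=\gamma\Gamma$ then homogeneity and Proposition~2.1(5) give
\[
F^{N-1}(\nabla u)F_\xi(\nabla u)=|\gamma|^{N-1}\operatorname{sgn}(\gamma)\,F^{N-1}(\nabla\Gamma)F_\xi(\nabla\Gamma)=|\gamma|^{N-2}\gamma\,F^{N-1}(\nabla\Gamma)F_\xi(\nabla\Gamma),
\]
so your flux equals $|\gamma|^{N-2}\gamma$, which coincides with $\gamma$ only when $N=2$. This is easily repaired by defining $\gamma$ through the inverse of $t\mapsto|t|^{N-2}t$ applied to the flux; the rest of the argument then goes through unchanged. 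Your claim that ``$\gamma$ is the unique possible coefficient'' is not justified at the flux stage and really only follows once the tangent-map limit is identified, so that sentence should be moved after the blow-down.
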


\begin{lem}\label{lem2.4}
 (see \cite{WangCXia1}) There exists a unique function $G(\cdot, 0)\in C^{1,\alpha}(\Omega\backslash \{0\})$ with
$|\nabla G|\in L^{1}(\Omega)$ and $G/\Gamma\in L^{\infty}(\Omega)$ satisfying
\begin{eqnarray}
\begin{cases}
 -Q_{N}G(\cdot,0)=\delta_{0}& \text{in}\ \Omega,\\
G(\cdot,0)|_{\partial\Omega}=0.
\end{cases}
\end{eqnarray}
Moreover, $G=\Gamma+h$ with $h\in C^{0}(\Omega)$ satisfying (2.8).
\end{lem}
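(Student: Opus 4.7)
The plan is to construct $G(\cdot,0)$ by approximating $\delta_{0}$ with smooth densities, extract a limit via Theorem 1.1 and local quasilinear regularity, identify the singular structure via Lemma 2.3, and derive uniqueness from strict monotonicity of the vector field $A(\xi):=F^{N-1}(\xi)F_{\xi}(\xi)$. For the construction, fix nonnegative $f_{n}\in C^{\infty}_{c}(\Omega)$ with $\mathrm{supp}\,f_{n}\subset\mathcal{W}_{1/n}(0)$ and $\int_{\Omega}f_{n}\,dx=1$. Leray--Lions theory for monotone quasilinear operators (which applies because $\mathrm{Hess}(F^{N})$ is positive definite, as verified in Section~3) produces a unique $G_{n}\in W^{1,N}_{0}(\Omega)$ solving $-Q_{N}G_{n}=f_{n}$. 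Since $\|f_{n}\|_{L^{1}}=1$, Theorem~1.1 gives uniform exponential integrability of $G_{n}$, and a Boccardo--Gallou\"et truncation argument on $T_{k}(G_{n})$ upgrades this to a uniform bound in $W^{1,q}_{0}(\Omega)$ for every $q<N$, in particular an $L^{1}$ bound on $|\nabla G_{n}|$.

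For the limit, note that $f_{n}\equiv 0$ outside any fixed compact subset of $\Omega\setminus\{0\}$ for $n$ large, so by the Tolksdorf--DiBenedetto--Lieberman interior $C^{1,\alpha}$ theory for $N$-Laplacian-type operators the sequence $\{G_{n}\}$ is precompact in $C^{1,\alpha}_{\mathrm{loc}}(\Omega\setminus\{0\})$. Passing to a subsequence, $G_{n}\to G$ in that topology and weakly in $W^{1,q}_{0}$, which gives $-Q_{N}G=\delta_{0}$ in $\mathcal{D}'(\Omega)$. To control the singularity I would apply the comparison principle from Section~3 on $\Omega\setminus\mathcal{W}_{1/n}(0)$ against the barriers $\pm\Gamma+c$ (which are $Q_{N}$-harmonic off the origin by Lemma 2.3) to force $|G_{n}|\le C(1+|\log F^{0}(x)|)$ uniformly, hence $G/\Gamma\in L^{\infty}(\Omega)$. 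Lemma~2.3 then produces $\gamma\in\mathbb{R}$ and $h\in C^{0}(\Omega)$ with $G=\gamma\Gamma+h$; testing $-Q_{N}G=\delta_{0}$ against a cutoff equal to $1$ near $0$ and computing the flux of $A(\nabla\Gamma)$ across $\partial\mathcal{W}_{r}(0)$ via Proposition~2.1 together with the normalization $k=|\mathcal{W}_{F}|$ pins down $\gamma=1$.

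For uniqueness, if $G_{1},G_{2}$ are two solutions satisfying the stated conclusions, write $G_{i}=\Gamma+h_{i}$ and set $w:=h_{1}-h_{2}\in C^{0}(\overline{\Omega})$, so $w$ vanishes on $\partial\Omega$. Multiplying $-Q_{N}G_{1}+Q_{N}G_{2}=0$ on $\Omega\setminus\{0\}$ by $w\eta_{\epsilon}$ with $\eta_{\epsilon}$ a smooth cutoff vanishing on $\mathcal{W}_{\epsilon}(0)$ and equal to $1$ outside $\mathcal{W}_{2\epsilon}(0)$, and integrating by parts, yields
\begin{equation*}
\int_{\Omega}\langle A(\nabla G_{1})-A(\nabla G_{2}),\nabla w\rangle \eta_{\epsilon}\,dx=-\int_{\partial\mathcal{W}_{\epsilon}(0)}w\,\langle A(\nabla G_{1})-A(\nabla G_{2}),\nu\rangle\,d\sigma.
\end{equation*}
Expanding $A(\nabla\Gamma+\nabla h_{i})$ around $\nabla\Gamma$ and invoking the decay $F^{0}(x)\nabla h_{i}(x)\to 0$ from (2.8), the integrand on the right is $o(F^{0}(x)^{1-N})$, so the boundary term vanishes as $\epsilon\to 0$. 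Strict monotonicity of $A$, coming from positive definiteness of $\mathrm{Hess}(F^{N})$, then forces $\nabla w\equiv 0$, hence $w\equiv 0$.

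The main obstacle is precisely this flux estimate in the uniqueness step: making rigorous the cancellation $A(\nabla G_{1})-A(\nabla G_{2})=o(F^{0}(x)^{1-N})$ on the shrinking surface $\partial\mathcal{W}_{\epsilon}(0)$ requires the quantitative information $F^{0}(x)\nabla h_{i}(x)\to 0$ from (2.8) rather than merely the continuity of $h_{i}$, combined with a careful first-order Taylor expansion of $A$ at the singular value $\nabla\Gamma$; without this, both the integrability of the flux and the strict sign of the interior term become unclear.
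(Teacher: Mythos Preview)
The paper does not prove Lemma~2.4; it is quoted with attribution to Wang--Xia \cite{WangCXia1}, so there is no proof in the paper to compare your argument against. (Note too that \cite{WangCXia1} treats $N=2$; the present $N$-dimensional statement is being taken as a routine extension.)

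That said, your sketch is a faithful reconstruction of the standard proof in this circle of ideas: mollify $\delta_{0}$, solve by monotone operator theory, use Theorem~1.1 and Boccardo--Gallou\"et truncations for a uniform $W^{1,q}_{0}$ bound, extract a limit with $C^{1,\alpha}_{\mathrm{loc}}$ regularity away from $0$, compare with the barriers $\pm\Gamma+c$, and read off the decomposition from Lemma~2.3. Two minor remarks. First, once Lemma~2.3 is invoked you need not compute any flux to pin down $\gamma$: that lemma already asserts $-Q_{N}u=\gamma\delta_{0}$ in the sense of measures, and matching this against $-Q_{N}G=\delta_{0}$ forces $\gamma=1$ directly. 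Second, in the uniqueness step your displayed identity mixes a smooth cutoff $\eta_{\epsilon}$ with a sharp surface integral on $\partial\mathcal{W}_{\epsilon}(0)$; with the smooth cutoff the error is actually a volume term $\int_{\{\epsilon<F^{0}<2\epsilon\}} w\,\langle A(\nabla G_{1})-A(\nabla G_{2}),\nabla\eta_{\epsilon}\rangle\,dx$. The estimate you want still goes through, since $|A(\nabla G_{1})-A(\nabla G_{2})|\cdot|\nabla\eta_{\epsilon}|\cdot|w|=o(\epsilon^{-(N-1)})\cdot O(\epsilon^{-1})\cdot O(1)$ integrated over a set of measure $O(\epsilon^{N})$ is $o(1)$, and the obstacle you flagged---the need for the quantitative decay $F^{0}(x)\nabla h_{i}(x)\to 0$ from (2.8) rather than mere continuity of $h_{i}$---is correctly identified as the one genuinely delicate point.
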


\section{The properties of $Q_{N}$ }
In this section, we will give the weak maximum principle, weak comparison principle and mean value property for the $N$-anisotropic Laplacian operator $Q_{N}$.

\begin{thm}\label{thm3.1}
(Weak maximum principle) Suppose that $-Q_{N}u\leq 0$ in $\Omega$ and $u\leq M$ on $\partial \Omega$, then $u(x)$ attains its maximum on the boundary, i.e. $u\leq M$ a.e. in $\Omega$.
\end{thm}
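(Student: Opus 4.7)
The plan is to run the standard energy argument for quasilinear operators in divergence form, using the Euler identity for $F$ to recover coercivity. Since $u$ is a weak subsolution, $-Q_N u \leq 0$ means
\begin{equation*}
\int_{\Omega} F^{N-1}(\nabla u)\, F_{\xi}(\nabla u)\cdot \nabla \varphi\, dx \leq 0
\end{equation*}
for every nonnegative test function $\varphi\in W^{1,N}_0(\Omega)$. The natural candidate to feed in is $\varphi:=(u-M)^{+}$, which lies in $W^{1,N}_0(\Omega)$ precisely because the boundary hypothesis $u\leq M$ on $\partial\Omega$ (read in the trace sense) forces its trace to vanish; it is also nonnegative.

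With this choice, $\nabla\varphi = \nabla u\cdot \chi_{\{u>M\}}$ almost everywhere, so the inequality reduces to
\begin{equation*}
\int_{\{u>M\}} F^{N-1}(\nabla u)\, \langle F_{\xi}(\nabla u),\nabla u\rangle\, dx \leq 0.
\end{equation*}
Now I invoke property (3) of Proposition 2.1, namely the Euler identity $\langle \nabla u, F_{\xi}(\nabla u)\rangle = F(\nabla u)$, which collapses the integrand to $F^{N}(\nabla u)$. The preceding inequality therefore gives $\int_{\{u>M\}} F^{N}(\nabla u)\, dx \leq 0$.

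Since $F\geq 0$, we conclude $F(\nabla u)=0$ a.e.\ on $\{u>M\}$, and the two-sided bound $a|\xi|\leq F(\xi)\leq b|\xi|$ from (2.2) then forces $\nabla u=0$ a.e.\ on $\{u>M\}$. Hence $\nabla\varphi=0$ a.e.\ in $\Omega$, so $\varphi$ is (equivalent to) a constant; its vanishing trace on $\partial\Omega$ pins that constant to $0$, giving $(u-M)^{+}\equiv 0$, i.e.\ $u\leq M$ a.e.\ in $\Omega$. There is no serious obstacle here; the only point that requires mild care is that $(u-M)^{+}$ is an admissible test function (it belongs to $W^{1,N}_0(\Omega)$ because truncation preserves Sobolev regularity and the boundary trace is nonpositive), and that the Euler identity applies pointwise a.e.\ on the set $\{\nabla u\neq 0\}$ while the integrand is trivially $0$ where $\nabla u$ vanishes, so no issue arises from the singularity of $F_{\xi}$ at the origin.
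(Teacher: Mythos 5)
Your proof is correct and follows essentially the same route as the paper: test $-Q_N u\leq 0$ against $(u-M)^+$, invoke the Euler identity $\langle\nabla u,F_\xi(\nabla u)\rangle=F(\nabla u)$ to reduce the integrand to $F^N(\nabla u)$, and conclude. You are in fact a bit more careful than the paper in the last step — the vanishing of $\int_{\{u>M\}}F^N(\nabla u)$ gives $\nabla u=0$ a.e.\ there (via $a|\xi|\leq F(\xi)$), and one then needs the zero trace of $(u-M)^+$ (equivalently Poincar\'e in $W_0^{1,N}$) to pass from $\nabla(u-M)^+=0$ to $(u-M)^+\equiv 0$; the paper's terse ``thus $\Omega^+$ has measure zero'' elides exactly this point.
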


\begin{proof}
Let $\Omega^{+}=\{x\in\Omega: u(x)>M\}.$ Multiplying $-Q_{N}u\leq 0$ by $(u-M)^{+}$ and using the integration by parts
, it follows that
\begin{eqnarray}
\begin{split}
 0\geq&\int_{\Omega^{+}}\sum_{i}F^{N-1}(\nabla u)F_{\xi_{i}}(\nabla u)u_{x_{i}}dx\\
 &\ \ +\int_{\partial \Omega^{+}}\sum_{i}F^{N-1}(\nabla u)F_{\xi_i}(\nabla u)(u-M)^{+}\nu_{i}ds\\
=&\int_{\Omega^{+}}(F(\nabla u))^{N}dx.
\end{split}
\end{eqnarray}
Thus $\Omega^{+}$ has measures zero and $u(x)\leq M$ a.e. in $\Omega$.
\end{proof}

\begin{thm}\label{thm3.2}
(Comparison principle) Suppose that $-Q_{N}u\leq -Q_{N}v$ in $\Omega$ and $u\leq v$ on $\partial \Omega$, then $u\leq v$ a.e. in $\Omega$.
\end{thm}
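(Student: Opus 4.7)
The plan is to mimic the proof of Theorem \ref{thm3.1}, but with the test function $(u-v)^+$ in place of $(u-M)^+$, and to replace the quadratic identity used there by a strict monotonicity inequality for the nonlinear vector field $A(\xi):=F^{N-1}(\xi)F_\xi(\xi)$.

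First I would test the weak inequality $-Q_N u + Q_N v \leq 0$ against $w:=(u-v)^+$, which lies in $W^{1,N}_0(\Omega^+)$ (extended by zero) because $u\leq v$ on $\partial\Omega$ forces $w$ to vanish on $\partial\Omega$. Writing $\Omega^+:=\{x\in\Omega : u(x)>v(x)\}$ and integrating by parts as in Theorem \ref{thm3.1} yields
\begin{equation*}
\int_{\Omega^+}\bigl\langle A(\nabla u)-A(\nabla v),\,\nabla u-\nabla v\bigr\rangle\,dx\ \leq\ 0.
\end{equation*}

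The crux is the strict monotonicity
\begin{equation*}
\bigl\langle A(\xi)-A(\eta),\,\xi-\eta\bigr\rangle > 0\quad\text{for all }\xi\neq\eta\text{ in }\mathbb R^N,
\end{equation*}
which is equivalent to strict convexity of $\Phi(\xi):=\tfrac{1}{N}F^N(\xi)$, since $A=\nabla\Phi$. For this I need positive definiteness of $\mathrm{Hess}(F^N)$ on $\mathbb{R}^N\setminus\{0\}$, which the introduction flags as the main technical point of this theorem. Writing $F^N=(F^2)^{N/2}$ and differentiating twice gives
\begin{equation*}
\mathrm{Hess}(F^N)(\eta,\eta)=\tfrac{N(N-2)}{4}F^{N-4}\bigl\langle\nabla F^2,\eta\bigr\rangle^{2}+\tfrac{N}{2}F^{N-2}\,\mathrm{Hess}(F^2)(\eta,\eta).
\end{equation*}
For $N\geq 2$ the first term is nonnegative and the second is strictly positive for $\eta\neq 0$ by the standing hypothesis on $\mathrm{Hess}(F^2)$, which gives the required positive definiteness. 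The continuity of $\Phi$ on all of $\mathbb R^N$ combined with strict convexity on $\mathbb R^N\setminus\{0\}$ promotes this to the global strict monotonicity stated above.

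Combining monotonicity with the integral inequality forces $\nabla u=\nabla v$ almost everywhere on $\Omega^+$, so $u-v$ is constant on each connected component of $\Omega^+$. On $\partial\Omega^+$, $(u-v)^+$ vanishes — either because $u=v$ in the interior of $\Omega$ or because $u\leq v$ on $\partial\Omega$ — so that constant is zero, contradicting $u>v$ on $\Omega^+$ unless $|\Omega^+|=0$. Hence $u\leq v$ almost everywhere in $\Omega$. The main obstacle is the Hessian computation in the second paragraph; the rest is a routine monotone-operator argument parallel to Theorem \ref{thm3.1}.
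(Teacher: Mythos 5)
Your proposal matches the paper's proof essentially step for step: test against $(u-v)^+$, reduce to positive definiteness of $\mathrm{Hess}(F^N)$ via the same chain-rule computation $\mathrm{Hess}(F^N)=\tfrac{N}{2}(\tfrac{N}{2}-1)(F^2)^{N/2-2}(\nabla F^2)^T\nabla F^2+\tfrac{N}{2}(F^2)^{N/2-1}\mathrm{Hess}(F^2)$, conclude $\nabla u=\nabla v$ on $\Omega^+$, and force $|\Omega^+|=0$ from the boundary condition. The only cosmetic difference is that you phrase the consequence as strict monotonicity of $A=\nabla(\tfrac{1}{N}F^N)$, while the paper writes the pairing directly via the fundamental theorem of calculus as $\int_0^1\tfrac{1}{N}F^N_{\xi_i\xi_j}(t\nabla u+(1-t)\nabla v)\,dt$; these are the same argument.
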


\begin{proof}
Let $\Omega^{+}=\{x\in\Omega: u(x)>v(x)\}.$ Assume that $\Omega^{+}$ has positive measure. Multiplying $-Q_{N}u\leq -Q_{N}v$ by $(u-v)^{+}$, it follows that
\begin{eqnarray}
\begin{split}
0\geq&\int_{\Omega^{+}}[F^{N-1}(\nabla u)F_{\xi}(\nabla u)-F^{N-1}(\nabla v)F_{\xi}(\nabla v)](\nabla u-\nabla v)dx\\
=&\int_{\Omega^{+}}\sum_{i,j}\left[\int_{0}^{1}\left(\frac{1}{N}F^{N}_{\xi_{i}\xi_{j}}
(t\nabla u+(1-t)\nabla u_{1})\right)dt\right]\partial_{x_{j}}(u-v)\partial_{x_{i}}(u-v)dx.
\end{split}
\end{eqnarray}
Since $$\nabla(F^{N})=\nabla((F^{2})^{\frac{N}{2}})=\frac{N}{2}(F^{2})^{\frac{N}{2}-1}\nabla(F^{2}),$$
and
$$\nabla^{2}(F^{N})=\frac{N}{2}(\frac{N}{2}-1)(F^{2})^{\frac{N}{2}-2}[\nabla(F^{2})]^{T}\nabla(F^{2})
+\frac{N}{2}(F^{2})^{\frac{N}{2}-1}\nabla^{2}(F^{2}),$$
where $[A]^{T}$ denotes the transpose of matrix $A$. Then from the positive definiteness of $Hess(F^{2})$, we obtain that $Hess(F^{N})$, $N\geq 2$ is also positive definite.
Thus we have that $\nabla u=\nabla v$ a.e. in $\Omega^{+}$. Since $u=v$ on $\partial\Omega^{+}$, it is easy to see that $\Omega^{+}$ has measures zero and $u\leq v$ a.e. in $\Omega$.
\end{proof}

\begin{thm}\label{thm3.3}
(Mean value property) Suppose that $F$ and $F^{0}$ satisfy
\begin{eqnarray}\label{cond3.3}
<F_{\xi}(x),F^{0}_{\xi}(y)>=\frac{<x,y>}{F^{N-1}(x)F^{0}(y)}
 \end{eqnarray}
 for all $x,y\in \mathbb{R}^{N}$.
If $Q_{N}u=0$ in $\Omega$ and $\mathcal{W}_{\rho}(0)=\{x\in \mathbb{R}^{N}: F^{0}(x)<\rho\}\subset \Omega$, then for every ball of radius $r\in (0,\rho)$, $u$ satisfies the mean value property on spheres,
\begin{eqnarray}
u(0)=\frac{1}{|\partial \mathcal{W}_{1}(0)|r^{N-1}}\int_{\partial \mathcal{W}_{r}(0)} u(x)ds
 \end{eqnarray}
and the corresponding mean value property on balls
\begin{eqnarray}
u(0)=\frac{1}{kr^{N}}\int_{\mathcal{W}_{r}(0)} u(x)dx.
 \end{eqnarray}
\end{thm}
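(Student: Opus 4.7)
The plan is to mimic the classical Gauss argument for harmonic functions. Define the spherical mean
$$\phi(r) := \frac{1}{|\partial \mathcal{W}_1(0)| \, r^{N-1}} \int_{\partial \mathcal{W}_r(0)} u(x)\, ds, \qquad 0 < r < \rho.$$
Since weak solutions of $Q_N u = 0$ are continuous (by standard quasilinear regularity), $\phi(r) \to u(0)$ as $r \to 0^+$; it therefore suffices to show $\phi'(r) \equiv 0$.

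Rescaling $y = r\omega$ with $\omega \in \partial \mathcal{W}_1(0)$ yields
$$\phi(r) = \frac{1}{|\partial \mathcal{W}_1|}\int_{\partial \mathcal{W}_1} u(r\omega)\, ds(\omega), \qquad \phi'(r) = \frac{1}{|\partial \mathcal{W}_1|}\int_{\partial \mathcal{W}_1} \omega \cdot \nabla u(r\omega)\, ds(\omega).$$
At a point $y = r\omega \in \partial \mathcal{W}_r$, property (6) of Proposition 2.1 gives $y = F^0(y)\, F_\xi(\nabla F^0(y))$, hence $\omega = y/F^0(y) = F_\xi(\nabla F^0(y))$. The pivotal step is to invoke the compatibility hypothesis (3.3) with $x \leftrightarrow \nabla u(y)$ and $y$ the spatial variable, which transforms the integrand:
$$\omega \cdot \nabla u(y) = \frac{y \cdot \nabla u(y)}{F^0(y)} = F^{N-1}(\nabla u(y))\, F_\xi(\nabla u(y)) \cdot \nabla F^0(y).$$

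Substituting back and reverting to an integral over $\partial \mathcal{W}_r$ exhibits $\phi'(r)$ as the flux of the vector field $V(y) := F^{N-1}(\nabla u) F_\xi(\nabla u)$ across $\partial \mathcal{W}_r$ read along the anisotropic conormal $\nabla F^0$. Since $\operatorname{div} V = -Q_N u = 0$ in $\Omega$, the divergence theorem forces the corresponding flux with respect to the Euclidean outer unit normal $\nu_E = \nabla F^0/|\nabla F^0|$ to vanish; combining these two facts yields $\phi'(r) = 0$ and thereby the sphere formula (3.4). The ball formula (3.5) then follows by multiplying (3.4) by $r^{N-1}$ and integrating in $r$, using the Wulff polar-coordinate change $y = s\omega$ with the appropriate Jacobian.

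I expect the main obstacle to be the reconciliation of the bare Euclidean surface element $ds$ in the statement with the anisotropic one that arises naturally from the divergence theorem: on $\partial \mathcal{W}_r$ the factor $|\nabla F^0|$ is in general not constant, so one must either interpret $ds$ as the anisotropic surface measure $ds_E/|\nabla F^0|$ (so that $|\partial \mathcal{W}_1| = Nk$ is the Wulff perimeter) or use hypothesis (3.3) itself to absorb the weighting. It is precisely at this step that the strong compatibility condition (3.3) is essential: without it the mean value identity for $Q_N$-harmonic functions generically fails, just as it does for ordinary $p$-harmonic functions.
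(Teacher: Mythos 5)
Your proposal is essentially the paper's own proof: differentiate the spherical mean $\phi(r)$, use the rescaling $x=r\omega$, apply the compatibility hypothesis \eqref{cond3.3} to rewrite $\omega\cdot\nabla u$ as the anisotropic flux density $F^{N-1}(\nabla u)\langle F_\xi(\nabla u),\nabla F^0\rangle$, and invoke the divergence theorem together with $Q_N u=0$ to get $\phi'\equiv 0$, then integrate in $r$ for the ball version. Your closing remark about the surface element is astute and actually goes beyond the paper's exposition: the paper writes $\phi'(r)=\int_{\partial\mathcal{W}_r}F^{N-1}(\nabla u)F_\xi(\nabla u)\cdot\nu\,ds$ with $\nu=\nabla F^0$ and passes directly to $\int_{\mathcal{W}_r}Q_Nu\,dx=0$, silently treating $\nu\,ds$ as the unit-normal flux element. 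As you note, this is only consistent if $ds$ is interpreted as the anisotropic surface measure $ds_E/|\nabla F^0|$, in which case $|\partial\mathcal{W}_1|=P_F(\mathcal{W}_1)=Nk$, the scaling $ds\sim r^{N-1}$ still holds (since $\nabla F^0$ is homogeneous of degree zero), and $\langle F_\xi(\nabla u),\nabla F^0\rangle\,ds_E/|\nabla F^0|=\langle F_\xi(\nabla u),\nu_E\rangle\,ds_E$ is exactly the Euclidean flux element; this also makes the normalization match the ball formula via the co-area formula $\frac{d}{dr}|\mathcal{W}_r|=Nkr^{N-1}$. So your identification of the potential obstacle is correct and its resolution is the anisotropic interpretation of $ds$; with that fixed, your argument and the paper's coincide.
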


\begin{proof}
Let
\begin{eqnarray}
\begin{split}
\phi(r):=&\frac{1}{|\partial \mathcal{W}_{1}(0)|r^{N-1}}\int_{\partial \mathcal{W}_{r}(0)}u(x)ds(x)\\
=&\frac{1}{|\partial \mathcal{W}_{1}(0)|}\int_{\partial \mathcal{W}_{1}(0)}u(rz)ds(z).
\end{split}
\end{eqnarray}
Then
\begin{eqnarray}
\begin{split}
\phi'(r)&=\frac{1}{|\partial \mathcal{W}_{1}(0)|}\int_{\partial \mathcal{W}_{1}(0)}<\nabla u(rz),z>ds(z)\\
&=\frac{1}{|\partial \mathcal{W}_{1}(0)|r^{N-1}}\int_{\partial \mathcal{W}_{r}(0)}<\nabla u(x),\frac{x}{r}>ds(x).
\end{split}
\end{eqnarray}
By the condition \eqref{cond3.3}, it follows that
\begin{eqnarray}
<\nabla u,x>=F^{N-1}(\nabla u)<F_{\xi}(\nabla u), F^{0}_{\xi}(x)>F^{0}(x).
\end{eqnarray}
Since $F^{0}=r$ and $\nu=F^{0}_{\xi}(x)$ on $\partial \mathcal{W}_{r}(0)$, thus by integration by parts we have
\begin{eqnarray}
\phi'(r)=\int_{\partial \mathcal{W}_{r}(0)}\sum_{i=1}^{N}F^{N-1}(\nabla u)F_{\xi}(\nabla u)\nu_{i}ds=\int_{\mathcal{W}_{r}(0)}Q_{N}uds=0.
\end{eqnarray}
Thus the proof of the mean value property on spheres is completed. Integrating with respect to $r$, one can get the mean value property on ball.
\end{proof}

\section{A Priori Estimates}

In this section, with the method of level set and convex symmetrization, we firstly prove Brezis-Merle type concentration-compactness formula, i.e. Theorem 1.1 and Theorem 1.2. At the same time, we obtain the $L^{\infty}$ estimates for a single solution and uniform $L^{\infty}$  bounds for solutions of $-Q_{N}u=Ve^{u}$.

\begin{proof}[Proof of Theorem 1.1]
Assume that $R>0$ is the constant such that $|\Omega|=kR^{N}$. Let $v$ be the unique solution of the convex symmetrized Dirichlet problem \eqref{2.6}. According to Lemma 2.3, it follows that
$$u^{*}\leq v,$$
where $u^{*}$ is the convex symmetric decreasing rearrangement of $u$. In addition, $v(x)=v(F^{0}(x))=v(r)$ is convex symmetric with respect to $F$ and satisfies the following equation:
\begin{eqnarray}
\begin{cases}
 (|v'|^{N-2}v')'+\frac{N-1}{r}(|v'|^{N-2}v')+f^{\star}(r)=0,\\
v'(0)=0, v(R)=0.
\end{cases}
\end{eqnarray}
It follows that
$$-v'(r)=\frac{\left(\int_{0}^{r}s^{N-1}f^{\star}(s)ds\right)^{\frac{1}{N-1}}}{r}\leq \frac{1}{(Nk)^{\frac{1}{N-1}}}\frac{1}{r}||f^{\star}||_{L^{1}(\Omega^{\star})}^{\frac{1}{N-1}}
=\frac{1}{(Nk)^{\frac{1}{N-1}}}\frac{1}{r}||f||_{L^{1}(\Omega)}^{\frac{1}{N-1}}.$$
Then
$$v(r)=-\int_{r}^{R}v'(t)dt\leq \int_{r}^{R}\frac{1}{(Nk)^{\frac{1}{N-1}}}\frac{1}{r}||f||_{L^{1}(\Omega)}^{\frac{1}{N-1}}dt
=\frac{1}{(Nk)^{\frac{1}{N-1}}}||f||_{L^{1}(\Omega)}^{\frac{1}{N-1}}\log\frac{R}{r}.$$
Therefore
\begin{eqnarray}
\begin{split}
&\int_{\Omega}\exp\left(\frac{(N-\epsilon)(Nk)^{\frac{1}{N-1}}|u(x)|}
{||f||_{L^{1}(\Omega)}^{\frac{1}{N-1}}}\right)dx\\
=&\int_{\Omega^{\star}}\exp\left(\frac{(N-\epsilon)(Nk)^{\frac{1}{N-1}}u^{\star}(x)}
{||f||_{L^{1}(\Omega)}^{\frac{1}{N-1}}}\right)dx\\
\leq&\int_{\Omega^{\star}}\exp\left(\frac{(N-\epsilon)(Nk)^{\frac{1}{N-1}}v(x)}
{||f||_{L^{1}(\Omega)}^{\frac{1}{N-1}}}\right)dx\\
=&\int_{0}^{R}Nkr^{N-1}\exp\log\left(\frac{R}{r}\right)^{N-\epsilon}dr=\frac{Nk}{\epsilon}R^{N}.
\end{split}
\end{eqnarray}
Let $\epsilon(Nk)^{\frac{1}{N-1}}=\delta$, one have
\begin{eqnarray}
\int_{\Omega}\exp\left\{\frac{(\beta_{N}-\delta)|u(x)|}{||f||_{L^{1}}^{1/(N-1)}}\right\}dx\leq \frac{\beta_{N}}{\delta}kR^{N}=\frac{\beta_{N}}{\delta}|\Omega|.
\end{eqnarray}
\end{proof}

\begin{cor}\label{2.1}
Suppose that $u$ is a weak solution of (1.4) with $f\in L^{1}(\Omega)$. Then for any constant $s>0$, we have $\exp(s|u|)\in L^{1}(\Omega)$.
\end{cor}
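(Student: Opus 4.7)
The plan is to first observe that Theorem \ref{thm-main1.1} already handles small $s$: whenever $s||f||_{L^{1}(\Omega)}^{1/(N-1)} < \beta_{N}$, the choice $\delta = \beta_{N} - s||f||_{L^{1}(\Omega)}^{1/(N-1)} \in (0,\beta_{N})$ is admissible, and Theorem \ref{thm-main1.1} applied with this $\delta$ gives
$$\int_{\Omega}\exp(s|u|)\,dx \leq \frac{\beta_{N}|\Omega|}{\beta_{N}-s||f||_{L^{1}(\Omega)}^{1/(N-1)}} < \infty.$$
So the real task is extending the bound to arbitrarily large $s$, i.e.\ escaping the ``critical'' threshold built into Theorem \ref{thm-main1.1}.

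To handle arbitrary $s>0$, the strategy is to localize so that the relevant $L^{1}$ norm of $f$ becomes small. Fix $s>0$. By absolute continuity of the measure $|f|\,dx$, for every $x_{0}\in\overline{\Omega}$ one can choose a Wulff ball $\mathcal{W}_{r(x_{0})}(x_{0})$ with $||f||_{L^{1}(\mathcal{W}_{r(x_{0})}(x_{0})\cap\Omega)}^{1/(N-1)} < \beta_{N}/(2s)$. For an interior ball I would compare $u$ to its $Q_{N}$-harmonic replacement $h$ (the solution of $-Q_{N}h=0$ in the ball with $h=u$ on its boundary): after reducing to $f\geq 0$ via $f=f^{+}-f^{-}$ and the comparison principle (Theorem \ref{thm3.2}), Theorem \ref{thm-main1.2} applied on $\mathcal{W}_{r(x_{0})}(x_{0})$ yields exponential integrability of $|u-h|$ with coefficient at least $2s\,d_{0}^{1/(N-1)}$, while the weak maximum principle (Theorem \ref{thm3.1}) forces $|h|\leq \max_{\partial\mathcal{W}_{r(x_{0})}(x_{0})}|u|$. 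For a ball meeting $\partial\Omega$, the zero boundary condition on $u$ lets one apply Theorem \ref{thm-main1.1} directly to the restriction, again with a much improved exponent because $||f||_{L^{1}}$ on the small ball is small. A finite subcover of $\overline{\Omega}$ then sums up to the global estimate $\int_{\Omega}\exp(s|u|)\,dx<\infty$.

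The main obstacle is the nonlinearity of $Q_{N}$, which rules out the naive additive decomposition $u=u_{1}+u_{2}$ corresponding to a decomposition $f=f_{1}+f_{2}$ that would trivialize the argument in the Laplacian case; Theorem \ref{thm-main1.2} is precisely what takes its place, providing an exponential estimate for the deviation of $u$ from its $Q_{N}$-harmonic extension on each localized ball. A secondary technicality is ensuring that for most choices of radius the trace $u|_{\partial\mathcal{W}_{r(x_{0})}(x_{0})}$ is bounded, so that $\max_{\partial\mathcal{W}_{r(x_{0})}(x_{0})}|u|$ is finite; this can be arranged by a Fubini slicing argument using that Theorem \ref{thm-main1.1} already places $u$ in $L^{p}(\Omega)$ for every $p<\infty$, allowing the choice of a ``good'' radius on which the spherical restriction of $u$ is essentially bounded.
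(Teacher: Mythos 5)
Your proposal takes a genuinely different route from the paper, but it contains a gap that I do not see how to repair within your framework, and it rests on a misunderstanding of why the $f=f_1+f_2$ decomposition actually does work here.

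The paper's proof does split $f = f_1 + f_2$ with $\|f_1\|_{L^1}<\epsilon$ and $f_2\in L^\infty$. You dismiss this as a ``naive additive decomposition'' that fails in the nonlinear setting because one cannot write $u=u_1+u_2$ — but the paper never does that. The trick is to let $u_1$ solve $-Q_N u_1 = f_1$ with zero boundary data, control $\exp(s|u_1|)$ by Theorem~\ref{thm-main1.1}, and then observe that $f_2 = -(Q_N u - Q_N u_1)$ can be rewritten, via the Fundamental Theorem of Calculus, as $-\widetilde Q_N(u-u_1)$ where $\widetilde Q_N$ is a \emph{linear} uniformly elliptic operator with coefficients $\int_0^1 \frac{1}{N}F^N_{\xi_i\xi_j}(t\nabla u + (1-t)\nabla u_1)\,dt$. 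The positive definiteness of $\mathrm{Hess}(F^N)$ makes this elliptic, and then $\|u-u_1\|_{L^\infty}\le C\|f_2\|_{L^\infty}$ by standard linear theory. No harmonic replacement, no local radii, no boundary balls. That freezing-of-coefficients step is what neutralizes the nonlinearity, and it is the idea your proposal does not consider.

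The gap in your own construction is the bound $|h|\le \max_{\partial\mathcal W_{r}(x_0)}|u|$. Your justification for the finiteness of the right-hand side is a Fubini slicing argument using that Theorem~\ref{thm-main1.1} puts $u$ in $L^p(\Omega)$ for all finite $p$. But Fubini only gives that for a.e.\ radius $r$ the trace $u|_{\partial\mathcal W_r}$ belongs to $L^p(\partial\mathcal W_r)$ for every $p<\infty$; it does not give $L^\infty$ on the sphere, and a function like $\log\log(1/|\theta-\theta_0|)$ shows that $L^p$-for-all-$p$ on a sphere can coexist with essential unboundedness. Without an $L^\infty$ bound on the boundary trace, the weak maximum principle (Theorem~\ref{thm3.1}) produces no finite constant $M$, the $Q_N$-harmonic replacement $h$ has no a priori sup bound, and the decomposition $|u|\le|u-h|+|h|$ loses control. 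Two further points are glossed over: for a ball meeting $\partial\Omega$, $u$ vanishes only on $\partial\Omega\cap\mathcal W$, not on $\partial\mathcal W\cap\Omega$, so Theorem~\ref{thm-main1.1} does not apply to the restriction as stated; and Theorem~\ref{thm-main1.2} is formulated for $f>0$, so the ``reduction to $f\ge 0$ via $f=f^+-f^-$'' needs to be spelled out, since in the nonlinear setting solving with $f^+$ and $f^-$ separately yields functions unrelated to $u$ in any additive way.
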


\begin{proof}\label{proof of cor 2.1}
Let $0<\epsilon<\frac{1}{s}$, we split $f$ as $f=f_{1}+f_{2}$ with $||f_{1}||_{L^{1}(\Omega)}<\epsilon$
and $f_{2}\in L^{\infty}(\Omega)$. Assume that $u_{1}$ is the solution of
\begin{eqnarray}
\begin{cases}
 -Q_{N}u_{1}=f_{1}(x)& \text{in}\ \Omega,\\
u_{1}|_{\partial\Omega}=0.
\end{cases}
\end{eqnarray}
Choosing $\delta=\beta_{N}-1$ in Theorem 1.1, it follows that $\int_{\Omega}\exp\left[\frac{|u_{1}|}{||f_{1}||_{L^{1}(\Omega)}}\right]<\infty$ and thus $\int_{\Omega}\exp[s|u_{1}|]<\infty$. In addition, by the Fundamental Theorem of Calculus, we have
$$f_{2}=-(Q_{N}u-Q_{N}u_{1})=-\widetilde{Q}_{N}(u-u_{1}),$$
where
$$\widetilde{Q}_{N}(u-u_{1})=\sum_{i,j}\partial_{x_{i}}\left[\int_{0}^{1}\left(\frac{1}{N}F^{N}_{\xi_{i}\xi_{j}}
(t\nabla u+(1-t)\nabla u_{1})\right)dt\partial_{x_{j}}(u-u_{1})\right].$$
As in the proof of Theorem 3.2, from $Hess(F^{2})$ is positive definite, we get that $Hess(F^{N})$, $N\geq 2$ is also positive definite. It is easy to see that $\widetilde{Q}_{N}$ is an elliptic operator. From standard elliptic theory, we easily obtain $||u-u_{1}||_{L^{\infty}}\leq C||f_{2}||_{L^{\infty}}$. The desired conclusion is followed.
\end{proof}

\begin{cor}\label{2.2}
Suppose that $u$ is a weak solution of
\begin{eqnarray}
\begin{cases}
 -Q_{N}u=V(x)e^{u}& \text{in}\ \Omega,\\
u|_{\partial\Omega}=0,
\end{cases}
\end{eqnarray}
 where $V(x)\in L^{q}(\Omega)$ and $e^{u}\in L^{q'}(\Omega)$ for some $1<q\leq\infty$. Then $u\in L^{\infty}(\Omega)$.
\end{cor}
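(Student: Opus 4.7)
The plan is a bootstrapping argument: use the $L^{1}$-data corollary just proved to push $e^{u}$ into every $L^{t}$, then combine with $V\in L^{q}$ to bring the right-hand side into $L^{r}$ for some $r>1$, and finally invoke standard quasilinear regularity to conclude $u\in L^{\infty}(\Omega)$.

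First I would apply H\"older's inequality to $V\in L^{q}(\Omega)$ and $e^{u}\in L^{q'}(\Omega)$ to conclude that $f:=V e^{u}\in L^{1}(\Omega)$. Since $u$ is a weak solution of $-Q_{N}u=f$ with $u|_{\partial\Omega}=0$ and $f\in L^{1}(\Omega)$, the previous corollary gives $\exp(s|u|)\in L^{1}(\Omega)$ for every $s>0$; in particular $e^{u}\in L^{t}(\Omega)$ for every $t\in[1,\infty)$.

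Second, because $q>1$, I can choose $t$ so large that $\tfrac{1}{r}:=\tfrac{1}{q}+\tfrac{1}{t}$ still satisfies $r>1$. A second application of H\"older's inequality then yields $V e^{u}\in L^{r}(\Omega)$. Hence $u$ is a weak solution of $-Q_{N}u=g$ with $g:=V e^{u}\in L^{r}(\Omega)$, $r>1$, and $u|_{\partial\Omega}=0$.

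Finally, I would invoke standard regularity for the quasilinear, degenerate elliptic operator $Q_{N}$ on a bounded domain. Using the structural bounds $a|\xi|\leq F(\xi)\leq b|\xi|$ from Section~2 together with the positive definiteness of $\mathrm{Hess}(F^{N})$ verified inside the proof of Theorem~\ref{thm3.2}, the operator $Q_{N}$ is uniformly comparable to the standard $N$-Laplacian and falls within the scope of Serrin/Trudinger/DiBenedetto--Tolksdorf-type $L^{\infty}$ regularity results with $L^{r}$ data, $r>1$; equivalently, one can run a direct Moser iteration on the truncations $(u-k)^{+}$ starting from $u\in W^{1,N}_{0}(\Omega)$. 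The main obstacle is precisely this last step: one must justify that the anisotropic operator is covered by the classical regularity machinery (or adapt the Moser scheme to it), but the two-sided comparability of $F$ with the Euclidean norm and the uniform ellipticity of $\mathrm{Hess}(F^{N})$ away from the origin make this essentially routine.
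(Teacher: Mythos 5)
Your proposal is correct and takes essentially the same route as the paper: the paper's one-line proof of this corollary simply invokes Corollary~\ref{2.1} together with ``standard elliptic theory for quasilinear equations,'' and your argument is precisely the standard unpacking of that phrase (H\"older to get $Ve^{u}\in L^{1}$, Corollary~\ref{2.1} to bootstrap $e^{u}$ into every $L^{t}$, H\"older again to land $Ve^{u}\in L^{r}$ with $r>1$, then Serrin/Moser-type $L^{\infty}$ regularity for the anisotropic $N$-Laplacian).
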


\begin{proof}\label{proof of cor 2.2}
By Corollary 4.1 and the standard elliptic theory for quasilinear equation, one can obtain the desired result.
\end{proof}

\begin{cor}\label{2.3}
Suppose that $u_{n}$ is a weak solution of (1.9) with $u_{n}=0$ on $\partial\Omega$,
where $$||V_{n}||_{L^{q}(\Omega)}\ \text{for some}\ 1<q\leq \infty,$$
and
$$\int_{\Omega}|V_{n}|e^{u_{n}}\leq \epsilon_{0}<\frac{\beta_{N}}{q'}.$$
Then $$||u_{n}||_{L^{\infty}(\Omega)}\leq C.$$
\end{cor}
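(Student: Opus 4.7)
The plan is to bootstrap from the integral smallness of $V_n e^{u_n}$ to an $L^s$-bound with $s>1$ on the right-hand side via Theorem 1.1, and then invoke standard quasilinear elliptic regularity. This is the uniform-in-$n$ analogue of Corollary 4.2.

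First I would set $f_n := V_n e^{u_n}$ and observe that the hypothesis gives $\|f_n\|_{L^1(\Omega)} \leq \epsilon_0$ uniformly in $n$. Since $u_n$ vanishes on $\partial\Omega$, Theorem 1.1 applies directly: for each $\delta \in (0,\beta_N)$,
$$\int_\Omega \exp\!\left\{\frac{(\beta_N-\delta)\,|u_n|}{\|f_n\|_{L^1}^{1/(N-1)}}\right\} dx \leq \frac{\beta_N}{\delta}\,|\Omega|.$$
Monotonicity of the exponent in $\|f_n\|_{L^1}$ yields the uniform bound
$$\int_\Omega \exp\!\left\{\alpha(\delta)\,|u_n|\right\} dx \leq \frac{\beta_N}{\delta}|\Omega|, \qquad \alpha(\delta) := \frac{\beta_N-\delta}{\epsilon_0^{1/(N-1)}},$$
so $e^{u_n}$ is uniformly bounded in $L^{\alpha(\delta)}(\Omega)$ for every admissible $\delta$.

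Next, the strict inequality $\epsilon_{0} < \beta_N/q'$ is precisely the smallness needed to fix $\delta > 0$ so that $r := \alpha(\delta) > q'$. Hölder's inequality with $\|V_n\|_{L^q} \leq C_1$ then produces a uniform $L^s$-bound on $f_n$ where $1/s = 1/q + 1/r < 1$, so $s > 1$.

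Finally I would invoke the standard quasilinear elliptic regularity for $-Q_N u_n = f_n$ with zero boundary data and $f_n$ uniformly bounded in $L^s$, $s > 1$. This is legitimate here because the positive definiteness of $\operatorname{Hess}(F^N)$, verified in Theorem 3.2, places $Q_N$ in the classical $N$-Laplacian framework, so Serrin--Trudinger--Tolksdorf-type estimates apply. Alternatively, one can replicate the splitting argument of Corollary 4.1 uniformly in $n$: decompose $f_n = g_n + h_n$ with $\|g_n\|_{L^1}$ chosen small and $\|h_n\|_{L^\infty}$ controlled, let $\phi_n$ solve $-Q_N \phi_n = g_n$ with zero boundary data, use Theorem 1.1 to bound $\exp(c|\phi_n|)$ in $L^1$ uniformly for a pre-chosen $c$, and then control $u_n-\phi_n$ through the linearized operator $\widetilde{Q}_N$ from the proof of Theorem 3.2. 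The principal obstacle is precisely this last step: confirming that the passage from the $L^s$-bound on $f_n$ to an $L^\infty$-bound on $u_n$ is uniform in $n$; the key observation is that the relevant regularity constants depend only on the structural data of $Q_N$, on $|\Omega|$, and on $\|f_n\|_{L^s}$, not on the individual solutions.
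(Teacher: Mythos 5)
Your proposal follows exactly the paper's argument: use the uniform $L^1$-smallness of $V_ne^{u_n}$ together with Theorem 1.1 to put $e^{u_n}$ in $L^r(\Omega)$ for some $r>q'$, apply H\"older to get $V_ne^{u_n}$ bounded in $L^s(\Omega)$ with $s>1$, and then conclude by standard quasilinear $L^\infty$-regularity (the paper invokes Moser iteration at this point). Your write-up is in fact slightly more careful than the paper's in tracking the exponent $1/(N-1)$ coming from $\|f_n\|_{L^1}^{1/(N-1)}$ in the denominator of Theorem 1.1, but the underlying route is identical, so there is nothing further to add.
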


\begin{proof}\label{proof of cor 2.3}
Choosing $\delta>0$ such that $\beta_{N}-\delta>\epsilon_{0}(q'+\delta)$. By Theorem 1.1, we have
$e^{u_{n}}$ is bounded in $L^{q'+\delta}(\Omega)$ and $V_{n}e^{u_{n}}$ is bounded in $L^{p}(\Omega)$ for some $p>1$.
Then the conclusion can be obtained by the standard Morse iteration method.
\end{proof}

\begin{proof}[Proof of Theorem 1.2]
Let $\Omega_{t}=\{x\in \Omega:|u-v|>t\}$ and $\mu(t)=|\Omega_{t}|$. It follows that
\begin{eqnarray}
\begin{split}
&-(Q_{N}(u)-Q_{N}(v))=-\widetilde{Q}_{N}(u-v)\\
=&-\sum_{i,j}\partial_{x_{i}}\left[\int_{0}^{1}\left(\frac{1}{N}F^{N}_{\xi_{i}\xi_{j}}
((1-t)\nabla u+t\nabla v)\right)dt\partial_{x_{j}}(u-v)\right]=f>0.
\end{split}
\end{eqnarray}
Since this equation is uniformly elliptic, applying Hopf's boundary lemma, one can deduce that
$$\partial_{\nu}(u-v)<0,\ \ \nabla u-\nabla v\neq0\ \text{on}\ \partial\Omega_{t}.$$
Then
\begin{eqnarray}
\begin{split}
\int_{\Omega_{t}}f(x)dx&=\int_{\Omega_{t}}-(Q_{N}(u)-Q_{N}(v))dx\\
&=\int_{\partial\Omega_{t}}\left<F^{N-1}(\nabla u)F_{\xi}(\nabla u)-F^{N-1}(\nabla v)F_{\xi}(\nabla v),
\frac{\nabla u-\nabla v}{|\nabla u-\nabla v|}\right>\\
&\geq d_{0}\int_{\partial\Omega_{t}}\frac{F^{N}(\nabla u-\nabla v)}{|\nabla u-\nabla v|}.
\end{split}
\end{eqnarray}
By the isoperimetric inequality, the co-area formula and the H\"{o}lder's inequality, it follows that
\begin{eqnarray}
\begin{split}
Nk^{\frac{1}{N}}\mu(t)^{\frac{N-1}{N}}
\leq P_{F}(\Omega_{t})&=-\frac{d}{dt}\int_{\Omega_{t}}F(\nabla u-\nabla v)dx\\
&=\int_{\partial\Omega_{t}}\frac{F(\nabla u-\nabla v)}{|\nabla u-\nabla v|}\\
&\leq\left(\int_{\partial\Omega_{t}}\frac{F^{N}(\nabla u-\nabla v)}{|\nabla u-\nabla v|}\right)^{\frac{1}{N}}
\left(\int_{\partial\Omega_{t}}\frac{1}{|\nabla u-\nabla v|}\right)^{\frac{N-1}{N}}\\
&=\left(\int_{\partial\Omega_{t}}\frac{F^{N}(\nabla u-\nabla v)}{|\nabla u-\nabla v|}\right)^{\frac{1}{N}}\left(-\mu'(t)\right)^{\frac{N-1}{N}}.
\end{split}
\end{eqnarray}
It follows that
$$-\mu'(t)\geq \frac{d_{0}^{\frac{1}{N-1}}N^{\frac{N}{N-1}}k^{\frac{1}{N-1}}\mu(t)}{(\int_{\Omega_{t}}f(x)dx)^{\frac{1}{N-1}}}.$$
Therefore
\begin{eqnarray}\label{4.9}
-\frac{dt}{d\mu}\leq \frac{||f||_{L^{1}(\Omega)}^{\frac{1}{N-1}}}{d_{0}^{\frac{1}{N-1}}N^{\frac{N}{N-1}}k^{\frac{1}{N-1}}\mu(t)}.
\end{eqnarray}
Integrating \eqref{4.9} over $(\mu,|\Omega|)$, we obtain
$$t(\mu)\leq \frac{||f||_{L^{1}(\Omega)}^{\frac{1}{N-1}}}{d_{0}^{\frac{1}{N-1}}N^{\frac{N}{N-1}}k^{\frac{1}{N-1}}}
\log(\frac{|\Omega|}{\mu}).$$
It is easy to see that
$$\exp\left(\frac{(1-\epsilon)d_{0}^{\frac{1}{N-1}}N^{\frac{N}{N-1}}k^{\frac{1}{N-1}}}
{||f||_{L^{1}(\Omega)}^{\frac{1}{N-1}}}t(\mu)\right)\leq (\frac{|\Omega|}{\mu})^{1-\epsilon}.$$
Thus one can get
\begin{eqnarray}
\begin{split}
&\int_{\Omega}\exp\left(\frac{(1-\epsilon)d_{0}^{\frac{1}{N-1}}N^{\frac{N}{N-1}}k^{\frac{1}{N-1}}}
{||f||_{L^{1}(\Omega)}^{\frac{1}{N-1}}}|u-v|\right)dx\\
=&\int_{0}^{\infty}\exp\left(\frac{(1-\epsilon)d_{0}^{\frac{1}{N-1}}N^{\frac{N}{N-1}}k^{\frac{1}{N-1}}t}
{||f||_{L^{1}(\Omega)}^{\frac{1}{N-1}}})\right)(-\mu'(t)dt\\
=&\int_{0}^{|\Omega|}\exp\left(\frac{(1-\epsilon)d_{0}^{\frac{1}{N-1}}N^{\frac{N}{N-1}}k^{\frac{1}{N-1}}t(\mu)}
{||f||_{L^{1}(\Omega)}^{\frac{1}{N-1}}}\right)d\mu\leq\frac{|\Omega|}{\epsilon}.
\end{split}
\end{eqnarray}
This means that
$$\int_{\Omega}\exp\left(\frac{(1-\epsilon)d_{0}^{\frac{1}{N-1}}N^{\frac{1}{N-1}}k^{\frac{1}{N-1}}}
{||f||_{L^{1}(\Omega)}^{\frac{1}{N-1}}}|u-v|\right)dx\leq\frac{|\Omega|}{\epsilon}.$$
Choosing $\delta=\epsilon N^{\frac{N}{N-1}}k^{\frac{1}{N-1}}$, one can easily obtain the desired result.
\end{proof}

\begin{cor}\label{3.1}
Suppose that $u_{n}$ is a weak solution of (1.9),
where $V_{n}\geq 0\ \text{in}\ \Omega$. For $B_{R}\subset \Omega$, $ ||V_{n}||_{L^{q}(B_{R})}\leq C_{1}\ \text{for some}\ 1<q\leq \infty$, $||u_{n}^{+}||_{L^{N}(B_{R})}\leq C_{2}$
and
\begin{equation}\label{smallcondition}
\int_{B_{R}}V_{n}e^{u_{n}}\leq \epsilon_{0}<(\frac{\beta_{N}}{q'})^{N-1}d_{0}.
\end{equation}
Then there exists a positive constant $C$ such that
$$||u_{n}^{+}||_{L_{\text{loc}}^{\infty}(\Omega)}\leq C.$$
\end{cor}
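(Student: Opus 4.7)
The plan is to localize on a small Wulff ball $\mathcal{W}_r(x_0) \subset B_R$, decompose $u_n$ as its $Q_N$-harmonic extension plus a difference term, and then bootstrap through the equation. Concretely, I would first fix $x_0 \in B_R$ and a radius $r > 0$ with $\mathcal{W}_r(x_0) \subset B_R$, and let $w_n$ denote the unique $Q_N$-harmonic function in $\mathcal{W}_r(x_0)$ having boundary trace $u_n$. Since $-Q_N u_n = V_n e^{u_n} \geq 0 = -Q_N w_n$ and the boundary data agree, the comparison principle (Theorem 3.2) gives $w_n \leq u_n$ in $\mathcal{W}_r(x_0)$, whence $u_n - w_n \geq 0$ and $w_n^+ \leq u_n^+$.

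Next I would apply Theorem 1.2 to the pair $(u_n, w_n)$ on $\mathcal{W}_r(x_0)$, which yields, for every $\delta \in (0,\beta_N)$,
\begin{equation*}
\int_{\mathcal{W}_r(x_0)} \exp\!\left(\frac{(\beta_N - \delta)\, d_0^{1/(N-1)}}{\|V_n e^{u_n}\|_{L^1(\mathcal{W}_r)}^{1/(N-1)}}\, (u_n - w_n)\right) dx \leq \frac{|\mathcal{W}_r|}{\delta}.
\end{equation*}
The smallness assumption \eqref{smallcondition} reads $\epsilon_0^{1/(N-1)} < (\beta_N/q')\, d_0^{1/(N-1)}$, so I may fix $\delta>0$ and $\eta>0$ (independent of $n$) small enough that the exponent above is at least $q'(1+\eta)$. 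Thus $e^{q'(1+\eta)(u_n - w_n)}$ is bounded in $L^1(\mathcal{W}_r(x_0))$ uniformly in $n$.

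To dispose of $w_n$, I would invoke the standard DeGiorgi/Moser $L^\infty$-estimate for subsolutions of the $N$-anisotropic Laplace equation: since $w_n$ is $Q_N$-harmonic, $w_n^+$ is a weak $Q_N$-subsolution, and
\begin{equation*}
\sup_{\mathcal{W}_{r/2}(x_0)} w_n \leq C\left(\int_{\mathcal{W}_r(x_0)} (w_n^+)^N\, dx\right)^{1/N} \leq C\,\|u_n^+\|_{L^N(B_R)} \leq C C_2.
\end{equation*}
With $w_n$ bounded above on $\mathcal{W}_{r/2}(x_0)$, choosing $\delta' > 0$ so that $q'+\delta' < q'(1+\eta)$ and writing $e^{(q'+\delta')u_n} = e^{(q'+\delta')w_n}\, e^{(q'+\delta')(u_n-w_n)}$ shows that $e^{(q'+\delta')u_n}$ is uniformly bounded in $L^1(\mathcal{W}_{r/2}(x_0))$. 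Hölder's inequality with the $L^q$ bound on $V_n$ then gives $V_n e^{u_n}$ bounded in $L^p(\mathcal{W}_{r/2}(x_0))$ for some $p>1$, and the standard interior $L^\infty$-regularity for $-Q_N u = f$ with such an $f$ (Serrin/Tolksdorf-style Moser iteration, as used in the isotropic case in \cite{RenWei}) produces the desired bound $u_n^+ \leq C$ on, say, $\mathcal{W}_{r/4}(x_0)$. Covering an arbitrary compact subset of $B_R$ by finitely many such Wulff balls then finishes the proof.

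The main obstacle, and the place I would want to verify with care in the anisotropic setting, is precisely the sup estimate for $Q_N$-subsolutions together with the interior $L^\infty$-bound for $-Q_N u = f \in L^p$ ($p>1$). Both results are classical for the isotropic $N$-Laplacian; their extension to $Q_N$ should follow routinely from the strict convexity of $F^N$ established in the proof of Theorem 3.2, which renders $Q_N$ strictly monotone and uniformly elliptic in the relevant range, so that the Caccioppoli/Moser-iteration energy estimates carry over with only cosmetic modifications.
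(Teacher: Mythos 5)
Your proof follows essentially the same route as the paper's: compare $u_n$ with a $Q_N$-harmonic function $w_n$ matching its boundary data, bound $w_n^+$ in $L^\infty_{\mathrm{loc}}$ via Serrin's estimate and the $L^N$ bound, apply Theorem 1.2 to $u_n - w_n$ to conclude $e^{(q'+\delta)u_n}\in L^1_{\mathrm{loc}}$ uniformly, and finish by Moser iteration once $V_n e^{u_n}\in L^p_{\mathrm{loc}}$ for some $p>1$. One small point in your favor: you take $u_n$ (rather than $u_n^+$, as the paper does) as the Dirichlet data for the comparison function, which makes $w_n\leq u_n$ an immediate consequence of Theorem 3.2; with data $u_n^+$ the sign of the boundary comparison and the sign of the right-hand-side comparison go in opposite directions, so the paper's intermediate claim $v_n\leq u_n$ is not a literal application of the comparison principle and in effect needs exactly the modification you made.
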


\begin{proof}
Consider the problem
\begin{eqnarray}
\begin{cases}
 -Q_{N}v_{n}=0& \text{in}\ B_{R},\\
v_{n}=u_{n}^{+}&\ \text{on}\ \partial B_{R}.
\end{cases}
\end{eqnarray}
Then, by the weak comparison principle Theorem 3.2, one implies that $v_{n}\leq u_{n}$ in $B_{R}$, that is, $v_{n}^{+}\leq u^{+}_{n}$.
Therefore, $$||v_{n}^{+}||_{L^{N}(B_{R})}\leq ||u_{n}^{+}||_{L^{N}(B_{R})}\leq C_{2}.$$
By Serrin's local a priori estimates (see \cite{JSerrin}), we obtain that
$||v_{n}^{+}||_{L^{\infty}(B_{R})}\leq C$.

By Theorem 1.2, we know
$$\int_{B_{R}}\exp\left\{\frac{(\beta_{N}-\delta)d_{0}^{\frac{1}{N-1}}(u_{n}-v_{n})}
{||V_{n}e^{u_{n}}||_{L^{1}(B_{R})}^{\frac{1}{N-1}}}\right\}\leq \frac{|B_{R}|}{\delta}.$$
Note that $v_{n}\leq u_{n}$ implies $u_{n}^{+}-v_{n}^{+}\leq u_{n}-v_{n}$, then
$$\int_{B_{R/2}}\exp\left\{\frac{(\beta_{N}-\delta)d_{0}^{\frac{1}{N-1}}(u_{n}^{+}-v_{n}^{+})}
{||V_{n}e^{u_{n}}||_{L^{1}(B_{R})}^{\frac{1}{N-1}}}\right\}\leq \frac{|B_{R}|}{\delta}.$$
Combining this with $||v_{n}^{+}||_{L^{\infty}(B_{R})}\leq C$ and the smallness condition \eqref{smallcondition}, we know
$$\int_{B_{R/2}}\exp\left\{\frac{(\beta_{N}-\delta)d_{0}^{\frac{1}{N-1}}u_{n}^{+}}
{\epsilon_{0}^{\frac{1}{N-1}}}\right\}
\leq\int_{B_{R/2}}\exp\left\{\frac{(\beta_{N}-\delta)d_{0}^{\frac{1}{N-1}}u_{n}^{+}}
{||V_{n}e^{u_{n}}||_{L^{1}(B_{R})}^{\frac{1}{N-1}}}\right\}\leq \frac{|B_{R}|}{\delta}.$$
Now, choosing $\delta$ such that $(\beta_{N}-\delta)d_{0}^{\frac{1}{N-1}}>{\epsilon_{0}^{\frac{1}{N-1}}}(q'+\delta)$, we
deduce that $e^{u_{n}^{+}}$ is bounded in $L^{q'+\delta}(B_{R/2})$, which implies that $V_{n}e^{u_{n}^{+}}$
is bounded in $L^{p}(B_{R/2})$ for some $p>1$. If we take the problem
\begin{eqnarray}
\begin{cases}
 -Q_{N}w_{n}=V_{n}e^{u_{n}^{+}}& \text{in}\ B_{R/2},\\
w_{n}=u_{n}^{+}& \text{on}\ \partial B_{R/2}.
\end{cases}
\end{eqnarray}
Using Serrin's local a priori estimate again, we obtain
$||w_{n}||_{L^{\infty}(B_{R/4})}\leq C$. Note that $u_{n}^{+}\leq w_{n}$ by the weak comparison principle,
then $||u_{n}^{+}||_{L^{\infty}(B_{R/4})}\leq C$.
\end{proof}

\section{Proof of Blow-up theorem}

Using the results of Section 4, let us prove the Blow-up Theorem, i.e. Theorem 1.3.

\begin{proof}[Proof of Theorem 1.3]
Since $V_{n}e^{u_{n}}$ is bounded in $L^{1}(\Omega)$, then there exists a nonnegative bounded measures $\mu$ such that for a subsequence (still denoted by $V_{n}e^{u_{n}}$),
$$\int V_{n}e^{u_{n}}\psi\rightarrow \int\psi d\mu$$
in the sense of measures on $\Omega$ for any $\psi\in C_{c}(\Omega)$.
One call that a point $x_{0}\in \Omega$ is a $\gamma$ regular point if for some $\gamma>0$, there is a function
$\psi\in C_{c}(\Omega)$, $0\leq\psi\leq1$ with $\psi=1$ in some neighborhood of $x_{0}$, such that
$$\int \psi d\mu<\gamma.$$
Let $\Sigma$ be the set of non $\gamma$ regular point in $\Omega$. It is easy to see that
$$x_{0}\in\Sigma\Leftrightarrow \mu(\{x_{0}\})\geq \gamma.$$
In addition, since $\mu$ is a bounded measure, one know that $\Sigma$ is finite.

 We now split the proof into three steps.

{ \bf  Step 1.} $S=\Sigma$.
If $x_{0}$ is a $(\frac{\beta_{N}}{q'})^{N-1}d_{0}$ regular point, then by Corollary 4.4, there is $R_{0}>0$ such that $u_{n}^{+}$ is bounded in $L^{\infty}(B_{R_{0}}(x_{0}))$. In other words, if $x_{0}\notin \Sigma$, then
$x_{0}\notin S$, i.e. $S\subset \Sigma$. Conversely, with the similar method in \cite{BrezisMerle}, one can get $\Sigma\subset S$.
Here we omit the details.

{ \bf  Step 2.} $S=\emptyset$ implies (i) or (ii) holds.
$S=\emptyset$ means that $u_{n}$ is bounded in $L^{\infty}_{\text{loc}}(\Omega)$ and $V_{n}e^{u_{n}}$ is bounded in $L^{q}_{\text{loc}}(\Omega)$. This implies that $\mu\in L^{1}(\Omega)\bigcap L^{q}_{\text{loc}}(\Omega)$.
Assume that $v_{n} $ is the weak solution of
\begin{eqnarray}
\begin{cases}
 -Q_{N}v_{n}=V_{n}e^{u_{n}}& \text{in}\ \Omega,\\
v_{n}=0& \text{on}\ \partial \Omega.
\end{cases}
\end{eqnarray}
Obviously, $v_{n}\rightarrow v$ uniformly on any compact subset of $\Omega$, where $v$ is the weak solution of
\begin{eqnarray}
\begin{cases}
 -Q_{N}v=\mu& \text{in}\ \Omega,\\
v=0& \text{on}\ \partial \Omega.
\end{cases}
\end{eqnarray}
Let $w_{n}=u_{n}-v_{n}$, then $-\widetilde{Q}_{N}w_{n}=0$ in $\Omega$ and $w_{n^{+}}$ is bounded in $L^{\infty}_{\text{loc}}(\Omega)$, where $\widetilde{Q}_{N}$ is defined as in Corollary 4.1. Using the Harnack's inequality (see \cite{JSerrin}), one obtains the following two possibility:

$(a)$ a subsequence $(w_{n_{k}})$ is bounded in $L^{\infty}_{\text{loc}}(\Omega)$; or

$(b)$ $(w_{n})$ converges uniformly to $-\infty$ on compact subset of $\Omega$.

It is easy to see that $(a)$ corresponds to Case (i) and $(b)$ to Case (ii).

{ \bf  Step 3.} $S\neq\emptyset$ implies (iii) holds.
Similar with Step 2, by Harnack's inequality \cite{JSerrin}, one gets
the following two possibility:

$(c)$ a subsequence $(w_{n_{k}})$ is bounded in $L^{\infty}_{\text{loc}}(\Omega\backslash S)$;

$(d)$ $(w_{n})$ converges uniformly to $-\infty$ on compact subset of $\Omega\backslash S$.

With the similar method in \cite{BrezisMerle}, we can exclude the possibility (c). It is important to note that we use the Green function $G(x)=\frac{1}{(Nk)^{\frac{1}{N-1}}}\log\frac{1}{F^{0}(x-x_{0})}$ of $Q_{N}$ in $\mathcal{W}_{R}(x_{0})$ instead of the ordinary Laplacian operator in a ball.

  Combing the above argument, the proof of Theorem 1.3 is completed.

\end{proof}

\section{Proof of Theorem 1.4}

In this section, we prove the Theorem 1.4 by analyzing the Pohozaev identity and using the expansion of Green function of $N$-anisotropic Laplacian operator,

\begin{proof}[Proof of Theorem 1.4]
Without loss of generality, we assume that $u_{n}$ blow up at $0$. Let $v_{n}$ be the weak solution of
\begin{eqnarray}
\begin{cases}
 -Q_{N}v_{n}=V_{n}e^{u_{n}}& \text{in}\ \Omega,\\
v_{n}=0&\ \text{on}\ \partial \Omega.
\end{cases}
\end{eqnarray}
and
$z_{n}=u_{n}-\min_{\partial \Omega}u_{n}-v_{n}$. One easily obtains that
\begin{eqnarray}
\begin{cases}
 -\widetilde{Q}_{N}z_{n}=0& \text{in}\ \Omega,\\
z_{n}=u_{n}-\min_{\partial \Omega}u_{n}&\ \text{on}\ \partial \Omega.
\end{cases}
\end{eqnarray}
By the standard quasilinear uniformly elliptic equation theory and condition (1.11), it follows that
\begin{equation}
||z_{n}||_{L^{\infty}(\Omega)}\leq ||z_{n}||_{L^{\infty}(\partial\Omega)}\leq C
 \ \ \text{and}\ \ ||\nabla z_{n}||_{L^{\infty}(\Omega)}\leq C.
\end{equation}
In addition, we may assume that $z_{n}\rightarrow z$ uniformly in $C^{0}(\overline{\Omega})\bigcap C_{\text{loc}}^{1}(\Omega)$. Let $Z_{n}=V_{n}\exp\{z_{n}+min_{\partial\Omega}u_{n}\}$, we get
\begin{equation}
-Q_{N}v_{n}=V_{n}e^{u_{n}}=Z_{n}e^{v_{n}}.
\end{equation}

  On the other hand, since 0 is the blow-up point, we obtain that  when $x_{n}\rightarrow 0$, $u_{n}(x_{n})=\max_{\overline{\Omega}}u_{n}\rightarrow\infty$.
Let $\delta_{n}=\exp\{-u_{n}(x_{n})/N\}$ and $\widetilde{u_{n}}(x)=u_{n}(\delta_{n}x+x_{n})+N\log \delta_{n}$.
Clearly, $\widetilde{u_{n}}\rightarrow \widetilde{u}$ locally in $C^{1}(\mathbb{R}^{N})$, where $\widetilde{u}$
is the solution of
\begin{eqnarray}
\begin{cases}
 -Q_{N}\widetilde{u}=V(0)e^{\widetilde{u}}& \text{in}\ \mathbb{R}^{N},\\
\widetilde{u}(0)=0,\\
\widetilde{u}\leq 0 &\text{in}\ \mathbb{R}^{N},\\
\int_{\mathbb{R}^{N}}e^{\widetilde{u}}<+\infty.
\end{cases}
\end{eqnarray}
If $V(0)=0$, then $\widetilde{u}$ must be a constant by Liouville type theorem, which contradicts $\int_{\mathbb{R}^{N}}e^{\widetilde{u}}<+\infty$. Thus it follows that $V(0)>0$, which deduces that $V_{n}$ has positive lower bound near the origin. Since $\nabla\log Z_{n}=\nabla\log V_{n}+\nabla z_{n}$,  by using (6.3) and the condition (1.10), one can get
\begin{equation}||\nabla \log Z_{n}||_{L^{\infty}(B_{r})}\leq C
\end{equation}
 for some $r>0$.

    From Theorem 1.3, we have for any $\psi\in C_{0}^{\infty}(\Omega)$,
    $$\int_{\Omega}-Q_{N}v_{n}\psi=\int_{\Omega}V_{n}e^{u_{n}}\psi\rightarrow \alpha \psi(0).$$

Let $\Omega_{1}=\{x:\ a|\nabla v_{n}(x)|\leq 1\}$, $\Omega_{2}=\{x:\ a|\nabla v_{n}(x)|>1\}$. By (2.2) and Proposition 2.1, we obtain

\begin{eqnarray}
\begin{split}
||\nabla v_{n}||_{L^{q}(\Omega)}&\leq||\nabla v_{n}||_{L^{q}(\Omega_{1})}+||\nabla v_{n}||_{L^{q}(\Omega_{2})}\\
&\leq C+\sup \{\int_{\Omega_{2}}\nabla v_{n}\nabla \psi:
||\psi||_{W_{0}^{1,q'}}=1\}\\
&\leq C+C\sup \{\int_{\Omega_{2}}F^{N-1}(\nabla v_{n})F_{\xi}(\nabla v_{n})\nabla \psi:
||\psi||_{W_{0}^{1,q'}}=1\}.\\
\end{split}
\end{eqnarray}

It is easy to see that $||\psi||_{L^{\infty}(\Omega)}\leq C$ by the Sobolev embedding.
Thus
$$\int_{\Omega_{2}}F^{N-1}(\nabla v_{n})F_{\xi}(\nabla v_{n})\nabla \psi=\int_{\Omega_{2}}-Q_{N}v_{n}\psi\leq ||V_{n}e^{u_{n}}||_{L^{1}(\Omega)}||\psi||_{L^{\infty}(\Omega)}\leq C.$$
Therefore $||\nabla v_{n}||_{L^{q}(\Omega)}\leq C$ for any $1<q<\frac{N}{N-1}$.

By Lemma 2.4 and 2.5, we have a unique Green function of

\begin{eqnarray}
\begin{cases}
 -Q_{N}G(\cdot,0)=\alpha \delta_{0}& \text{in}\ \Omega,\\
G(\cdot,0)=0 & \text{on}\ \partial \Omega,\\
\end{cases}
\end{eqnarray}
and $G$ has a decomposition
\begin{equation}G(x)=-\frac{\alpha}{(Nk)^{\frac{1}{N-1}}}\log F^{0}(x)+h(x),
\end{equation}
with
\begin{equation}
h(x)\in C^{0}(\Omega)\ \text{and}\ \lim_{x\rightarrow0}\ h(x)\ \text{exists},\ \lim_{x\rightarrow0}|x|\nabla h(x)=0.
\end{equation}
One easily obtains that $v_{n}\rightharpoonup G$ weakly in $W^{1,q}(\Omega)$.

By Corollary 4.4, from $\int_{\widetilde{\Omega}}V_{n}e^{u_{n}}\rightarrow 0$ for any $\widetilde{\Omega}\subset\subset\Omega\backslash {0}$, we obtain $||v_{n}^{+}||_{L^{\infty}(\widetilde{\Omega})}\leq C$. Thus $||v_{n}^{+}||_{C^{1,\beta}(\widetilde{\Omega})}\leq C$ for some $0<\beta<1$ by equation (6.4). Hence
$v_{n}\rightarrow G$ strongly in $C^{1,\beta}(\widetilde{\Omega})$.

Multiplying (6.4) by $<x,\nabla v_{n}>$ and integrating by parts, one can get the Pohozaev identity as follows:

\begin{eqnarray}
\begin{split}
&\int_{\partial \mathcal{W_{\epsilon}}}-F^{N-1}(\nabla v_{n})<F_{\xi}(\nabla v_{n},\nu)><x,\nabla v_{n}>\\
&\ \ \ \  +\frac{1}{N}F^{N}(\nabla v_{n})<x,\nu>\\
=&\int_{\partial \mathcal{W_{\epsilon}}}Z_{n}e^{v_{n}}<x,\nu>-
\int_{\mathcal{W_{\epsilon}}}NZ_{n}e^{v_{n}}+<x,\nabla\log Z_{n}>Z_{n}e^{v_{n}},
\end{split}
\end{eqnarray}
where $\nu=\frac{\nabla F^{0}}{|\nabla F^{0}|}$ is the unit outward normal.

Letting $n\rightarrow \infty$, the left-hand side of (6.11) converges to
\begin{eqnarray}
\begin{split}
I:&=\int_{\partial \mathcal{W_{\epsilon}}}-F^{N-1}(\nabla G)<F_{\xi}(\nabla G),\nu><x,\nabla G>\\
&\quad+\frac{1}{N}F^{N}(\nabla G)<x,\nu>.
\end{split}
\end{eqnarray}

By (6.9), (6.10) and Proposition 2.1, we obtain that on $\partial \mathcal{W_{\epsilon}}$,
\begin{eqnarray}
\begin{split}F^{N-1}(\nabla G)&=F^{N-1}\left(-\frac{\alpha}{(Nk)^{\frac{1}{N-1}}}\frac{\nabla F^{0}}{F^{0}}+o(\frac{1}{F^{0}})\right)\\
&\leq \left(\frac{\alpha}{\epsilon (Nk)^{\frac{1}{N-1}}}+o(\frac{1}{\epsilon})\right)^{N-1},
\end{split}
\end{eqnarray}

\begin{eqnarray}
\begin{split}
<F_{\xi}(\nabla G),\nu>&=<F_{\xi}(\nabla G),\frac{\nabla F^{0}}{|\nabla F^{0}|}>\\
&=\left<F_{\xi}(\nabla G),\left(-\frac{(Nk)^{\frac{1}{N-1}}}{\alpha}F^{0}\right)\frac{\nabla G-o(\frac{1}{F^{0}})}{|\nabla F^{0}|}\right>\\
&=-\frac{\epsilon(Nk)^{\frac{1}{N-1}}}{\alpha}\left(\frac{F(\nabla G)}{|\nabla F^{0}|}-\frac{o({\frac{1}{\epsilon}})}{|\nabla F^{0}|}\right)\\
&=-(1+o(1))\frac{1}{|\nabla F^{0}|},
\end{split}
\end{eqnarray}

\begin{eqnarray}
\begin{split}
<x,\nabla G>&=<x,-\frac{\alpha}{(Nk)^{\frac{1}{N-1}}}\frac{\nabla F^{0}}{F^{0}}+o(\frac{1}{F^{0}})>\\
&=-\frac{\alpha}{(Nk)^{\frac{1}{N-1}}}+<x,o(\frac{1}{\epsilon})>=-\frac{\alpha}{(Nk)^{\frac{1}{N-1}}}+o(1),\\
\end{split}
\end{eqnarray}
and
\begin{equation}
<x,\nu>=<x,\frac{\nabla F^{0}}{|\nabla F^{0}|}>=\frac{\epsilon}{|\nabla F^{0}|}.
\end{equation}

Substituting the above formula (6.13)-(6.16) into (6.12), one obtains
\begin{eqnarray}
\begin{split}
I=&\int_{\partial \mathcal{W_{\epsilon}}}\left(\frac{\alpha}{\epsilon (Nk)^{\frac{1}{N-1}}}+o(\frac{1}{\epsilon})\right)^{N-1}(1+o(1))\\
&\ \ \ \times\frac{1}{|\nabla F^{0}|}\left(-\frac{\alpha}{(Nk)^{\frac{1}{N-1}}}+o(1)\right)\\
&+\frac{1}{N}\left(\frac{\alpha}{\epsilon (Nk)^{\frac{1}{N-1}}}+o(\frac{1}{\epsilon})\right)^{N}\frac{\epsilon}{|\nabla F^{0}|}\\
=&-\frac{N-1}{N}\left\{\left(\frac{\alpha}{(Nk)^{\frac{1}{N-1}}}\right)^{N}+o(1)\right\}\frac{1}{\epsilon^{N-1}}
\int_{\partial \mathcal{W_{\epsilon}}}\frac{1}{|\nabla F^{0}|}\\
=&-(N-1)k\left(\frac{\alpha}{(Nk)^{\frac{1}{N-1}}}\right)^{N}+o(1).
\end{split}
\end{eqnarray}

Letting $n\rightarrow \infty$ and $\epsilon \rightarrow 0$, one deduce that the right-hand side of (6.11)
converges to $-N\alpha$ by (6.6) and $V_{n}e^{u_{n}}\rightharpoonup\alpha \delta_{0}$. Then we easily get $\alpha=\left(\frac{N^{N+1}k^{\frac{1}{N-1}}}{N-1}\right)^{\frac{1}{N-1}}$. Thus the proof of Theorem 1.4 is completed.
\end{proof}

\newpage



\begin{thebibliography}{10}

\bibitem{AlvinoFeroneTrombetti} A. Alvino, V. Ferone, G. Trombetti, P.-L. Lions. Convex symmetrization and applications. Ann. Inst. H. Poincar¨¦ Anal. Non
Lin¨¦aire, 14 (1997) 275-293.
\bibitem{BartolucciTarantello} D. Bartolucci, G. Tarantello. Liouville type equations with singular data and their applications to periodic multivortices for
the electroweak theory. Comm. Math. Phys, 229 (2002) 3-47.
\bibitem{BelloniFerone} M. Belloni, V. Ferone, B. Kawohl. Isoperimetric inequalities, Wulff shape and related questions for strongly nonlinear elliptic
operators. Z. Angew. Math. Phys, 54 (2003) 771-783.
\bibitem{BellettiniPaolini}G. Bellettini, M. Paolini. Anisotropic motion by mean curvature in the context of Finsler geometry. Hokkaido Math. J. 25 (1996) 537--566.
\bibitem{BrezisMerle} H. Brezis, F. Merle. Uniform estimates and blow-up behavior for solutions of $-\Delta u = V(x)e^{u}$ in two dimensions. Comm.
Partial Differential Equations, 16 (1991) 1223-1253.
\bibitem{SChangPYang} S. Chang, P. Yang. Prescribing Gaussian curvature on $S_{2}$. Acta Math., 159 (1987) 215-259.
\bibitem{CChenCLin} C. Chen, C. Lin. Sharp estimates for solutions of multi-bubbles in compact Riemann surfaces. Comm. Pure Appl. Math., 55
(2002) 728-771.
\bibitem{WChenCLi2}W. Chen, C. Li. Prescribing Gaussian curvatures on surfaces with conical singularities. J. Geom. Anal., 1 (4) (1991) 359-372.
\bibitem{ LDamascelli} L. Damascelli. Comparison theorems for some quasilinear degenerate elliptic operators and applications to symmetry and
monotonicity results. Ann. Inst. H. Poincare Anal. Non Lineaire, 15 (1998) 493-516.
\bibitem{DingJostLiWang1} W. Ding, J. Jost, J. Li, G. Wang. The differential equation $\Delta u=8\pi-8\pi e^{u}$ on a compact Riemann surface. Asian J. Math., 1
(1997) 230-248.
\bibitem{DingJostLiWang2} W. Ding, J. Jost, J. Li, G. Wang. Existence results for mean field equations. Ann. Inst. H. Poincare Anal. Non Lineaire, 16 (5)
(1999) 653-666.
\bibitem{ZDjadli} Z. Djadli. Existence result for the mean field problem on Riemann surfaces of all genuses. Commun. Contemp. Math., 10
(2008) 205-220.

\bibitem{VFeroneBKawohl} V. Ferone, B. Kawohl. Remarks on a Finsler-Laplacian. Proc. Amer. Math. Soc., 137 (2009) 247-253.


\bibitem{IFonsecaSMuller}  I. Fonseca, S. Muller. A uniqueness proof for the Wulff theorem. Proc. Roy. Soc. Edinburgh Sect., A 119 (1991) 125-136.

\bibitem{JJostGWang} J. Jost, G. Wang. Analytic aspects of the Toda system. I. A Moser-Trudinger inequality. Comm. Pure Appl. Math., 54 (2001) 1289-1319.
\bibitem{Jliyli} J. Li, Y. Li. Solutions for Toda systems on Riemann surfaces. Annali della Scuola Normale Superiore di Pisa-Classe di Scienze-Serie V,  4 ( (2005) 703-728.
\bibitem{YLi1}  Y. Li. Harnack type inequality: the method of moving planes. Comm. Math. Phys., 200 (1999) 421-444.
\bibitem{YLi2} Y. Li, I. Shafrir. Blow-up analysis for solutions of $-\Delta u = Ve^{u}$ in dimension two. Indiana Univ. Math. J., 43 (1994) 1255-1270.
\bibitem{CLin} C. Lin. An expository survey of the recent development of mean field equations. Discrete Contin. Dyn. Syst., 19 (2007)
387-410.
\bibitem{JLiouville} J. Liouville. Sur l\'{e}quation aux derivees partielles $\frac{\partial^{2}\log\lambda}{\partial u\partial v}\pm\frac{\lambda}{2a^{2}}=0$. J. Math. Pures Appl., 18 (1853) 71-72.

\bibitem{RenWei} X. Ren, J. Wei. Counting peaks of solutions to some quasilinear elliptic equations with large exponents. J. Differ. Equations, 117 (1995) 28-55.
\bibitem{JSerrin} J. Serrin. Local behavior of solutions of quasi-linear equations. Acta Math., 111 (1964) 247-302.
\bibitem{JSpruckYang} J. Spruck, Y. Yang. On multivortices in the electroweak theory I: Existence of periodic solutions. Comm. Math. Phys., 144
(1992) 1-16.
\bibitem{MStruweGTarantello} M. Struwe, G. Tarantello. On the multivortex solutions in the Chern-Simons gauge theory. Boll. Unione Mat. Ital. Sez. B
Artic. Ric. Mat., 1 (1998) 109-121.
\bibitem{GTalenti} G. Talenti. Elliptic equations and rearrangements. Ann. Sc. Norm. Super. Pisa Cl. Sci., 3 (1976) 697-718.
\bibitem{GTarantello}G. Tarantello. Multiple condensate solutions for the Chern-Simons-Higgs theory. J. Math. Phys., 37 (1996) 3769-3796.
\bibitem{WangCXia1} G. Wang, C. Xia. Blow-up analysis of a Finsler-Liouville equation in two dimensions. J. Differ. Equations, 252 (2012) 1668-1700.

\bibitem{GWangCXia2} G. Wang, C. Xia. A characterization of the Wulff shape by an overdetermined anisotropic PDE. Arch. Ration. Mech. Anal., 99
(2011) 99-115.
\bibitem{GWangCXia3} G. Wang, C. Xia. A Brunn.Minkowski inequality for a Finsler-Laplacian. Analysis (Munich), 31 (2011) 103-115.
\bibitem{GWulff} G. Wulff. Zur Frage der Geschwindigkeit des Wachstums und der Auflosung der Kristallflachen. Z. Krist., 34 (1901) 449-530.
\end{thebibliography}
\end{document}